\definecolor{linkred}{RGB}{0,191,255} 
\definecolor{linkblue}{RGB}{16, 78, 139}
	\titlespacing{\section}{0pt}{12pt}{0pt}
	\titlespacing{\subsection}{0pt}{6pt}{0pt}
\long\def\@footnotetext#1{%
\H@@footnotetext{%
\ifHy@nesting 
\hyper@@anchor{\@currentHref}{#1}%
\else 
\Hy@raisedlink{\hyper@@anchor{\@currentHref}{\relax}}#1%
\fi 
}}
\def\@footnotemark{%
\leavevmode 
\ifhmode\edef\@x@sf{\the\spacefactor}\nobreak\fi 
\H@refstepcounter{Hfootnote}%
\hyper@makecurrent{Hfootnote}%
\hyper@linkstart{link}{\@currentHref}%
\@makefnmark 
\hyper@linkend 
\ifhmode\spacefactor\@x@sf\fi 
\relax 
}%
\theoremstyle{plain}
\newtheorem{theorem}{Theorem}[section]
\newtheorem*{theorem-otal}{Theorem 1.3}
\newtheorem{proposition}[theorem]{Proposition}
\newtheorem{lemma}[theorem]{Lemma}
\newtheorem{corollary}[theorem]{Corollary}
\theoremstyle{definition}
\newtheorem{remark}[theorem]{Remark}
\newcommand{\area}{{\rm area}}
\newcommand{\Cr}{{\rm cr}}
\long\def\symbolfootnote[#1]#2{\begingroup%
\def\thefootnote{\fnsymbol{footnote}}\footnote[#1]{#2}\endgroup}
\def\blfootnote{\xdef\@thefnmark{}\@footnotetext}
\begin{document}

{\Large \bfseries Crossing number inequalities for curves on surfaces}

{\large Alfredo Hubard and Hugo Parlier\symbolfootnote[0]{\small 
{\em 2020 Mathematics Subject Classification:} Primary: 05C10, 57K20. Secondary: 32G15, 30F60. \\
{\em Key words and phrases:} crossing lemma, graph drawings, counting curves, hyperbolic surfaces, intersection}}

\vspace{0.5cm}

{\bf Abstract.}
We prove that, as $m$ grows, any family of $m$ homotopically distinct closed curves on a surface induces a number of crossings that grows at least like $(m \log m)^2$. We use this to answer two questions of Pach, Tardos and Toth related to crossing numbers of drawings of multigraphs where edges are required to be non-homotopic. Furthermore, we generalize these results, obtaining effective bounds with optimal growth rates on every orientable surface.
\vspace{0.5cm}

\section{Introduction} \label{s:introduction}

Due to its beauty and large number of applications, the so-called crossing lemma for planar drawings of simple graphs has become a corner stone in the theory of topological graphs. Among multiple generalizations, the inequality was generalized to drawings of graphs (sometimes called multigraphs) where edges are required to be pairwise non-homotopic. Thought of in topological terms, this is equivalent to representing edges of the graph by arcs, and vertices by marked points of the plane or on a sphere. 

Pach, Tardos and Toth show that any such drawing with $m$ edges and $n$ vertices, for $m>4n$, must have at least, on the order of, $m^2/n$ crossings. 
They also show that for fixed $n$, the crossing number is super-quadratic in $m$ and provide a family of examples in which the order of growth is $\sfrac{m^2}{n} \log^2 \sfrac{m}{n}$.

We show that their upper bound does indeed have the correct order of growth, and simultaneously generalize it to drawings on surfaces of any genus $g\geq 0$. The Euler characteristic $\chi = 2-2g-n$ of the surface with the vertices removed is a useful quantity in this context:

\begin{theorem}\label{thm:graphs}
Let $G$ be a graph with $n$ vertices and $m$ edges drawn on a closed surface of genus $g\geq 0$ such that no two edges are homotopic. Then, for  $m \geq e^6(|\chi|+1)$, its crossing number satisfies
\[\Cr(G)\geq  \frac{1}{512|\chi|} m^2 \left({\log^2 \frac{m}{e^6(|\chi|+1)} - 256 |\chi| }\right).
\]
\end{theorem}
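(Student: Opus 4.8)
The plan is to derive Theorem~\ref{thm:graphs} from an estimate for families of closed curves, which is where essentially all of the content sits, and for which I would want an \emph{effective} argument (integral geometry rather than ergodic theory, so as to keep explicit constants). Set $S=\Sigma\setminus\{v_1,\dots,v_n\}$, the genus-$g$ surface with the $n$ vertices deleted, so $\chi(S)=\chi$; one may assume $\chi<0$, since the finitely many topological types with $\chi\geq 0$ carry only $O(1)$ non-homotopic arcs and the hypothesis $m\geq e^6(|\chi|+1)$ makes the statement vacuous for them. The edges of $G$ become $m$ pairwise non-homotopic essential arcs of $S$, and $\Cr(G)$ is at least the minimum, over all representatives in general position, of the total number of crossings of the resulting arc system. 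I would then pass to closed curves, either directly (proving the counting and length estimates below for arc systems) or via the orientation double $DS=S\cup_\partial S$ obtained after truncating the cusps: an arc $\alpha$ together with its mirror image forms a closed curve $\widehat\alpha$ on $DS$ with $\chi(DS)=2\chi$, $\length(\widehat\alpha)=2\length(\alpha)$, and pairwise crossings doubling (after a small perturbation separating endpoints along $\partial S$), and one checks that non-homotopic essential arcs yield non-freely-homotopic primitive closed curves. Either way it suffices to prove: on a surface $S'$ with $|\chi(S')|=O(|\chi|)$, any $m$ pairwise non-homotopic primitive closed curves have total crossing number (self-crossings and crossings between distinct curves) at least $\tfrac{c}{|\chi(S')|}\,m^2\log^2\!\big(\tfrac{m}{C(|\chi(S')|+1)}\big)$ up to lower-order terms.

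For this closed-curve estimate, fix a hyperbolic metric on $S'$ whose systole is bounded below by a universal $\eps_0>0$ (such a metric exists on every surface of negative Euler characteristic), so $S'$ has area $A=2\pi|\chi(S')|$ and no thin collars. Replace each of the $m$ curves by its geodesic representative: this does not increase any crossing number, since geodesics simultaneously realise minimal self- and pairwise intersection numbers, and distinct primitive classes yield distinct primitive closed geodesics; order their lengths $\ell_1\leq\cdots\leq\ell_m$. The first ingredient is a counting lemma: the number of primitive closed geodesics on $S'$ of length at most $L$ is $\lesssim(|\chi(S')|+1)e^{L}$, whose precise form is exactly what pins down the normalisation $e^6(|\chi|+1)$ in the theorem. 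This is the standard orbit-counting argument in $\Hyp^2$ — a primitive conjugacy class of translation length $\leq L$ has a representative moving a fixed thick-part basepoint a distance $\leq L+O(1)$, and at most $\area\,B(L+O(1))/A\lesssim e^{L}$ $\pi_1$-translates of such a ball can be packed, the factor $(|\chi(S')|+1)$ absorbing the dependence on injectivity radius and on the diameter of the thick part. Hence $\ell_j\geq\log\!\big(j/(e^{O(1)}(|\chi(S')|+1))\big)$ for all $j$, and summing via Stirling gives $\Lambda:=\sum_{j=1}^{m}\ell_j\geq m\log\!\big(m/(e^{O(1)}(|\chi(S')|+1))\big)$.

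The second, and essential, ingredient is a bound
\[
X\ \geq\ \frac{\Lambda^2}{C_1 A}\ -\ \big(\text{lower-order terms in }\Lambda\text{ and }|\chi(S')|\big)
\]
for the total crossing number $X$ of the geodesic system $\Gamma$. I would prove it by integral geometry: the Crofton--Santal\'o formula on $S'$ says that a geodesic segment of length $t$ with uniformly random initial point and direction meets $\Gamma$ on average $\tfrac{2\Lambda t}{\pi A}$ times; cutting $\Gamma$ itself into $\asymp\Lambda/t$ pieces of length $\asymp t$ and summing the expected number of intersections of each piece with $\Gamma$ produces, after accounting for multiplicities, a total of $\asymp\Lambda^2/A$ intersection points. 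The hard part — and the real obstacle — is that the naive isoperimetric inequality ``$X\gtrsim\Lambda^2/A$'' is \emph{false} for arbitrary geodesic systems (a large power of a Dehn twist applied to a fixed simple closed curve stays simple while its length grows linearly), so one must genuinely use that $\Gamma$ consists of \emph{many distinct, hence individually short} geodesics rather than a few long ones: having chosen a metric without thin collars rules out the places where length can fail to ``spread,'' and $\ell_j\lesssim\log m$ for the $m$ shortest geodesics forces each piece to equidistribute well enough; quantifying this effectively, with explicit constants, is the crux of the whole argument, and the $-256|\chi|$ in the theorem is precisely the slack that absorbs these correction terms together with the $O(\Lambda)$ lower-order contribution.

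Combining the two ingredients, $X\geq\frac{\Lambda^2}{C_1 A}-(\cdots)\geq\frac{1}{2\pi C_1|\chi(S')|}\big(m\log\frac{m}{e^{O(1)}(|\chi(S')|+1)}\big)^2-(\cdots)$; choosing the universal constants so that $2\pi C_1=512$ (the doubling of $\chi$ and of $X$ in the reduction cancelling each other) and bounding the correction terms by $\frac{m^2}{512|\chi|}\cdot 256|\chi|=\frac{m^2}{2}$ yields $\Cr(G)\geq\frac{1}{512|\chi|}m^2\big(\log^2\frac{m}{e^6(|\chi|+1)}-256|\chi|\big)$ whenever $m\geq e^6(|\chi|+1)$.
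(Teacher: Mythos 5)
Your reduction from graphs to closed curves and your counting-of-short-geodesics step are both in the right spirit, and they roughly parallel what the paper does (the paper converts each arc $a$ to the primitive closed curve $\gamma_a$ bounding a small neighbourhood of $a$, with $i(\gamma_a,\gamma_b)\le 4\,i(a,b)+4$, while you propose doubling along the boundary; the paper's length lower bound, Proposition~\ref{prop:average}, is a clean version of your orbit-counting estimate, based on Lemma~6.6.4 of \cite{BuserBook}).

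The problem is the second ingredient, which you yourself flag as ``the crux of the whole argument'' and ``the real obstacle'': the inequality $X\gtrsim\Lambda^2/A$ for the geodesic system in a \emph{fixed} hyperbolic metric of bounded systole. You correctly observe that this is false in general (powers of a Dehn twist give long geodesics with no self-intersections), and you propose to rescue it by arguing that short geodesics ``equidistribute well enough'' on a thick surface; but you do not actually prove this, and there is no obvious way to make it effective. As stated, the proposal has a genuine gap exactly at the step that carries all the content of the theorem. The failure mode is real: on any fixed metric one can choose homotopy classes whose geodesic representatives clump near a pants decomposition, so that the Crofton argument applied segment-by-segment does not give $\Lambda^2/A$ crossings; the hypotheses ``$m$ distinct primitive classes'' and ``$\ell_j\lesssim\log m$'' are not, by themselves, enough to force equidistribution quantitatively.

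The paper sidesteps this entirely by \emph{not} fixing the metric in advance. It triangulates the surface using the given curve system $\Gamma$ (vertices at crossings), then invokes the Koebe--Andreev--Thurston circle packing theorem to produce a hyperbolic metric $X\cong\Sigma$ together with a circle packing realizing that triangulation. In this adapted metric every intersection point of $\Gamma$ sits at the centre of one of the packing disks, and a representative of each curve can be routed through the diameters of the disks it meets, giving $\ell_X(\Gamma)\le 4\sum_i r_i$. The disks are disjoint, so $\sum_i \pi r_i^2<\area(X)=2\pi|\chi|$, and a single Cauchy--Schwarz inequality yields $\Cr(\Gamma)>\ell_X(\Gamma)^2/(32|\chi|)$ with no equidistribution argument whatsoever. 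Combined with Proposition~\ref{prop:average} (with $\lambda=1$) this gives Theorem~\ref{thm:curves}, and the arc-to-curve lemma then gives Theorem~\ref{thm:graphs}, with the $-256|\chi|$ term coming from the additive $+4$ loss per pair in $i(\gamma_a,\gamma_b)\le 4\,i(a,b)+4$, which contributes at most $m^2/2$. If you want to salvage your plan, the lesson is that the metric should be \emph{chosen as a function of $\Gamma$}, not fixed in advance; the circle packing theorem is exactly the tool that does this and replaces your equidistribution heuristic by an honest area bound.
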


To show this, we in fact show a crossing result for curves on surfaces.
\begin{theorem}\label{thm:curves}
    Let $\Gamma$ be a collection of $m$ distinct homotopy classes of non-trivial closed and primitive curves on a surface $\Sigma$ of Euler characteristic $\chi$. Then, for all $m \geq  \,e^6(|\chi|+1)$, we have
    $$
    \Cr(\Gamma) > \frac{1}{128 |\chi|} \left(m \log\left(\frac{m}{(|\chi|+1) e^6}\right)\right)^2.
    $$
\end{theorem}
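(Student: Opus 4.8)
The plan is to prove Theorem~\ref{thm:curves} by a bootstrapping argument: first establish a weaker "base case" lower bound valid for all families, then amplify it via a probabilistic sub-sampling trick in the spirit of the classical proof of the crossing lemma from the "crossing number $\geq$ edges $-$ vertices" inequality.

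I would first need a linear-type estimate: any collection $\Gamma$ of $m$ distinct nontrivial primitive homotopy classes on $\Sigma$ satisfies $\Cr(\Gamma) \gtrsim m \log m$ once $m$ is large compared to $|\chi|$, with no $|\chi|$ in the denominator. The natural route here is hyperbolic geometry. Fix a complete hyperbolic metric on $\Sigma$ (or on its interior, if there are punctures/boundary from removed vertices) and realize each homotopy class by its geodesic representative. The key geometric input is that on a hyperbolic surface of area $\approx 2\pi|\chi|$, the number of distinct closed geodesics of length $\leq L$ grows at most exponentially, roughly like $e^{L}$ (a collar/counting estimate), so if $|\Gamma| = m$ then a definite fraction of the curves in $\Gamma$ have length at least $\approx \log(m/|\chi|)$. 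A long geodesic, by an isoperimetric or collar argument, must self-intersect or cross the others many times — one packs total length $\gtrsim m \log(m/|\chi|)$ into a surface of bounded area, and a length-versus-intersection inequality (total length $L_{\mathrm{tot}}$ forces $\gtrsim L_{\mathrm{tot}}^2/\area$ crossings, analogous to the fact that $k$ chords of a disk of total length $L$ have $\gtrsim L^2$ crossings) yields $\Cr(\Gamma) \gtrsim (m\log(m/|\chi|))^2/|\chi|$ directly — but to make the amplification clean it is cleaner to first extract only the bound $\Cr(\Gamma) \geq c\, m \log(m/(|\chi|+1))$, or even just $\Cr(\Gamma) \geq c\,m$ for $m$ past the threshold, and recover the square through sampling.

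Then comes the amplification. Suppose $\Cr(\Gamma) \geq f(m, |\chi|)$ is known for all families; pick a random sub-collection $\Gamma'$ keeping each curve independently with probability $p$. Then $\mathbb{E}[|\Gamma'|] = pm$ and $\mathbb{E}[\Cr(\Gamma')] \leq p^2 \Cr(\Gamma)$ (each crossing survives with probability $p^2$; here one uses that crossings between \emph{distinct} curves dominate, and self-crossings, surviving with probability only $p$, can be absorbed). Applying the base bound to $\Gamma'$ and taking expectations gives $p^2 \Cr(\Gamma) \gtrsim \mathbb{E}[f(|\Gamma'|,|\chi|)] \gtrsim f(pm,|\chi|)$ by convexity, hence $\Cr(\Gamma) \gtrsim p^{-2} f(pm,|\chi|)$; optimizing $p$ — roughly $p \approx (|\chi|+1)e^6/m$ times a constant, so that $pm$ sits right at the threshold where $\log$ is still positive — converts a linear-in-$m$ base bound into the claimed $(m\log(m/(|\chi|+1)e^6))^2/|\chi|$. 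The explicit constants $1/128$, the $e^6$, and the threshold $m \geq e^6(|\chi|+1)$ are bookkeeping artifacts of this optimization and of the geodesic-counting constant.

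The \textbf{main obstacle} I expect is the geometric base case on surfaces of arbitrary (and possibly large) Euler characteristic: controlling the number of short geodesics uniformly over all hyperbolic structures — thin parts can contain many short curves in their cores and many homotopy classes wrapping around thin handles — and then proving a clean length-times-intersection inequality that behaves correctly as $|\chi| \to \infty$. One must either choose the hyperbolic metric carefully (e.g. pants decomposition with controlled cuff lengths, or a metric where the systole is bounded below in terms of $|\chi|$), or argue combinatorially with a fixed triangulation/ideal triangulation and count intersections with edges of the triangulation rather than with geodesics. Getting the dependence on $|\chi|$ to land exactly at $1/|\chi|$ (optimal, matching Theorem~\ref{thm:graphs}'s lower-bound examples) rather than something worse is the delicate quantitative point; the amplification step itself is standard.
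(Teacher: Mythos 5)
Your proposal actually contains two different routes, and neither, as written, reaches the statement.

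\textbf{The amplification route loses the logarithm.} You propose to start from a base bound of the form $\Cr(\Gamma) \geq c\,m\log(m/T)$ with $T \approx e^{6}(|\chi|+1)$, sub-sample each curve with probability $p$, and optimize. Carrying this out gives
\[
p^{2}\,\Cr(\Gamma) \ \geq\ \mathbb{E}[\Cr(\Gamma')] \ \gtrsim\ (pm)\log(pm/T), \qquad\text{so}\qquad \Cr(\Gamma) \ \gtrsim\ \frac{m\log(pm/T)}{p}.
\]
Differentiating in $p$ shows the maximum occurs when $\log(pm/T)=1$, i.e.\ $p = eT/m$, yielding only $\Cr(\Gamma) \gtrsim m^{2}/T$, with no $\log^{2}(m/T)$ factor. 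This is not an artifact of the constants: unlike the classical crossing lemma, where the vertex count $n$ itself shrinks under sampling (the $p^{4}$ vs.\ $p^{2}$ vs.\ $p$ interplay is what produces the $m^{3}/n^{2}$), here $|\chi|$ is fixed and does not scale with the sample, so the probabilistic amplification cannot manufacture the extra $\log^{2}$. Since the theorem's $(m\log(m/T))^{2}/|\chi|$ strictly dominates $m^{2}/T$ as $m\to\infty$, the amplification argument proves something genuinely weaker.

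\textbf{The direct route needs a non-obvious choice of metric.} Your parenthetical remark --- that a length-versus-area inequality $\Cr(\Gamma) \gtrsim \ell_{X}(\Gamma)^{2}/\area(X)$ combined with the geodesic counting bound gives the result directly --- is in fact the paper's argument. But the inequality is \emph{false} for an arbitrary hyperbolic metric $X$: nothing forces a family of geodesics to cross in proportion to the square of its total length if the metric is chosen independently of $\Gamma$ (thin parts and twisting let you pump up length without proportionally increasing crossings). The paper's key move, which your write-up does not supply, is to let the curves choose the metric: put $\Gamma$ in minimal position, treat the crossings as vertices of a graph drawing, complete to a triangulation $T$, and apply the Koebe--Andreev--Thurston circle packing theorem to produce a hyperbolic $X\cong\Sigma$ in which each crossing is the center of a disk of some radius $r_{i}$, all disks pairwise disjoint. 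Then the geometric representatives of $\Gamma$ have total length at most $4\sum_{i} r_{i}$, so $\ell_{X}(\Gamma) \leq 4\sum r_{i}$, and Cauchy--Schwarz together with $\sum \pi r_{i}^{2} < \area(X) = 2\pi|\chi|$ gives exactly $\Cr(\Gamma) > \ell_{X}(\Gamma)^{2}/(32|\chi|)$. Feeding in the geodesic-counting estimate (Proposition \ref{prop:average} with $\lambda=1$, which lower-bounds $\ell_{X}(\Gamma)$ for \emph{any} metric, and in particular this one) finishes the proof. So the missing idea is the circle-packing construction that makes the isoperimetric step valid; you correctly identified the ingredients but not the mechanism that binds the metric to the crossing pattern.
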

The crossing $\Cr(\Gamma)$ is really just another term for intersection, but we use the term crossing in analogy with the graph setting. And indeed Theorem \ref{thm:graphs} above follows from this result via a concatenation observation (see Lemma \ref{lem:cutandpaste}). There have also been other recent results about crossing numbers for curves \cite{baader2024,parlier-hubard,jorg2024} but, unlike here, they mostly concern simple curves. 

The proof of Theorem \ref{thm:curves} uses the geometry of hyperbolic surfaces in an essential way. We use the Koebe-Andreev-Thurston uniformization theorem to associate to a collection of curves a hyperbolic surface (as in, for instance, \cite{AougabSouto2018}), and in a way that is not dissimilar to the separator theorem \cite{miller-thurston, spielman}. A crucial ingredient in our setting is the use of length estimates on the first $m$ primitive closed geodesics on a hyperbolic surface of given topology (Proposition \ref{prop:average}).

Pach, Tardos and Toth \cite{pach-tardos-toth}, in their approach to a crossing lemma for graph drawings in the plane with non-homotopic edges, find upper bounds on the size of a family of homotopically distinct arcs which both self-intersect at most $k$ times and pairwise intersect at most $k$-times. They also provided lower bounds on this problem by constructing families of arcs  \cite{pach-tardos-toth} of size $2^{\sqrt{nk}}$ and asked the question about matching upper bounds. In both geometric topology and combinatorics, there is a plethora of results related to these types of problems (see for instance \cite{ABG2019,valtr,fox-pach-suk,Greene2019,mohar, pach-toth, Prz2015, SmithPrz2019}). In answer to this question, in \cite{wood} it was shown that such a family has cardinality at most $6^{13nk}$. While up until now, these upper bounds were used to prove crossing lemmas, we take an opposite approach and use our crossing number result to prove upper bounds on the number of curves and arcs. For curves we get:

\begin{corollary}\label{cor:curves}
Let $k\geq 1$. If $\Gamma$ is family of $m$ homotopically distinct closed curves on $\Sigma$, such that $i(\alpha,\beta)\leq k$ for all $\alpha,\beta\in \Gamma$, then 
$$m\leq e^{6\sqrt{ 2 k |\chi|} +\log(|\chi|+1) +6 }.$$
\end{corollary}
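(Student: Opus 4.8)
The plan is to derive Corollary~\ref{cor:curves} directly from Theorem~\ref{thm:curves} by bounding the total crossing number from above in terms of $k$ and $m$, and confronting this with the superquadratic lower bound.

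The plan is to pit the superquadratic lower bound of Theorem~\ref{thm:curves} against the trivial quadratic upper bound on $\Cr(\Gamma)$ forced by the hypothesis $i(\alpha,\beta)\le k$, and then solve the resulting inequality for $m$. I would first dispose of the easy regime: if $m<e^6(|\chi|+1)$ then $m<e^{\log(|\chi|+1)+6}\le e^{6\sqrt{2k|\chi|}+\log(|\chi|+1)+6}$, since $6\sqrt{2k|\chi|}\ge 0$, and there is nothing to prove. So I may assume $m\ge e^6(|\chi|+1)$, which is exactly the range where Theorem~\ref{thm:curves} applies; I would also assume $|\chi|\ge 1$ (needed for the bounds to make sense — on the torus the statement is in fact false for large $k$) and, as in Theorem~\ref{thm:curves}, that the curves are non-trivial and primitive.

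For the upper bound I would realise the members of $\Gamma$ simultaneously in minimal position and count: each of the $\binom{m}{2}$ pairs of distinct curves contributes at most $k$ crossings and each curve at most $k$ self-crossings, so
\[
\Cr(\Gamma)\ \le\ \binom{m}{2}k+mk\ =\ \tfrac12(m^2+m)k\ \le\ \tfrac{9}{16}\,k m^2,
\]
the last step holding because $m\ge e^6\ge 8$. (If one uses the convention that $\Cr(\Gamma)$ counts only crossings between distinct curves, one gets the cleaner $\Cr(\Gamma)<\tfrac12 km^2$ and the rest is unchanged.) Setting $L=\log\!\big(m/((|\chi|+1)e^6)\big)\ge 0$, Theorem~\ref{thm:curves} gives $\tfrac{(mL)^2}{128|\chi|}<\Cr(\Gamma)$, so combining with the display above,
\[
L^2\ <\ 128|\chi|\cdot\tfrac{9}{16}k\ =\ 72\,k|\chi|,\qquad\text{i.e.}\qquad L\ <\ \sqrt{72\,k|\chi|}\ =\ 6\sqrt{2k|\chi|}.
\]
Exponentiating, $m<(|\chi|+1)e^6\,e^{6\sqrt{2k|\chi|}}=e^{6\sqrt{2k|\chi|}+\log(|\chi|+1)+6}$, which is the claim.

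There is essentially no obstacle here beyond Theorem~\ref{thm:curves} itself: the corollary is that theorem read in reverse, trading ``many curves'' for ``many crossings''. The only care needed is with the constants, and in particular with the happy coincidence $\sqrt{72}=6\sqrt2$, which is what makes the factor $\tfrac1{128}$ and the threshold $e^6(|\chi|+1)$ of Theorem~\ref{thm:curves} reassemble precisely into the exponent $6\sqrt{2k|\chi|}+\log(|\chi|+1)+6$; one should also double-check that counting self-crossings (each $\le k$) still leaves room for the stated constant, which it does once $m\ge 8$.
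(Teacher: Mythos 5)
Your proof is correct and takes essentially the same route as the paper's: pit the superquadratic lower bound of Theorem~\ref{thm:curves} against the trivial $\tfrac{m(m+1)}{2}k$ upper bound on $\Cr(\Gamma)$ and solve for $m$, with the constants combining via $\tfrac{m}{m+1}\ge\tfrac{8}{9}$ for $m\ge 8$ (the paper phrases this as a pigeonhole giving one pair of high intersection, which is algebraically identical to your direct bound). If anything your handling of the small-$m$ regime is slightly cleaner: you dispose of all $m<e^6(|\chi|+1)$ trivially, whereas the paper only dismisses $m\le 8$ yet still needs $m\ge e^6(|\chi|+1)$ to invoke Theorem~\ref{thm:curves}.
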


For arcs we obtain:

\begin{corollary}\label{cor:arcs}
Let $k\geq 1$. If $\mathcal{A}$ is family of $m$ distinct arcs on $\Sigma$, such that $i(a,b)\leq k$ for all $a,b\in \Gamma$, then 
$$m\leq e^{24\sqrt{ (k+1) |\chi|} +\log(|\chi| +1)+6 }.$$
\end{corollary}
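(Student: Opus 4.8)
The plan is to deduce the bound from Theorem~\ref{thm:graphs}, by observing that a family of arcs with bounded pairwise intersection gives a drawing of a graph with few crossings. Regard $\mathcal{A}$ as the edge set of a drawing of a graph $G$ whose vertices sit at the endpoints of the arcs --- the marked points of $\Sigma$, or one vertex in a disk capping off each boundary component --- retaining \emph{every} marked point as a vertex, so that two arcs are homotopic as edges exactly when they are homotopic as arcs on $\Sigma$. With this convention the complement of the vertex set has Euler characteristic $\chi$, and since the $m$ arcs are pairwise non-homotopic, $G$ is a graph with $m$ edges drawn on a closed surface with no two edges homotopic; thus Theorem~\ref{thm:graphs} applies to it whenever $m\ge e^6(|\chi|+1)$, while if $m<e^6(|\chi|+1)$ the asserted inequality is trivial, its right-hand side being at least $e^{\log(|\chi|+1)+6}=e^6(|\chi|+1)$.

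To bound the crossing number of this drawing from above I would put the arcs in minimal position: fix a complete hyperbolic metric on $\Sigma$ and replace each arc by its geodesic representative. This is a homotopy, so the edges stay pairwise non-homotopic, and geodesic arcs realize both the minimal pairwise intersection numbers and the minimal self-intersection numbers in their homotopy classes. Hence each pair of edges crosses at most $i(a,b)\le k$ times and each edge self-crosses at most $i(a,a)\le k$ times, so the resulting drawing --- and a fortiori the crossing number appearing in Theorem~\ref{thm:graphs} --- has at most $\binom{m}{2}k+mk\le m^2 k$ crossings.

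Combining the two estimates, Theorem~\ref{thm:graphs} gives
\[
\frac{1}{512|\chi|}\,m^2\!\left(\log^2\frac{m}{e^6(|\chi|+1)}-256|\chi|\right)\;\le\;\Cr(G)\;\le\; m^2 k ,
\]
so $\log^2\frac{m}{e^6(|\chi|+1)}\le 512|\chi|k+256|\chi|\le 512(k+1)|\chi|$; taking square roots, $\log\frac{m}{e^6(|\chi|+1)}\le\sqrt{512(k+1)|\chi|}<24\sqrt{(k+1)|\chi|}$, and rearranging yields $m< e^{24\sqrt{(k+1)|\chi|}+\log(|\chi|+1)+6}$, which is the claim.

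The only step I expect to need genuine care is the first one --- the dictionary between ``arcs on $\Sigma$'' and ``edges of a drawing of $G$'': checking that the Euler-characteristic bookkeeping is exactly right, that capping off boundary components (respectively, keeping all marked points as vertices) does not merge distinct homotopy classes of arcs, and that arcs sharing an endpoint or forming loops at a vertex cause no trouble --- together with invoking the standard fact that a finite collection of arcs can be simultaneously isotoped so as to realize all of its geometric intersection numbers. Everything after that is a direct computation.
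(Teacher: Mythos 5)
Your proof is correct, and it takes a genuinely different route from the paper's. The paper deduces Corollary~\ref{cor:arcs} from Corollary~\ref{cor:curves}: it applies Lemma~\ref{lem:cutandpaste} to convert the arc family into a curve family whose pairwise intersections are bounded by $4k+4$, and then plugs $4k+4$ in place of $k$ into the formula of Corollary~\ref{cor:curves}. You instead bypass the curve corollary entirely, bounding $\Cr(\mathcal{A})\le m^2k$ directly from the hypothesis and feeding this into Theorem~\ref{thm:graphs}. The arithmetic checks out: $\sqrt{512}<24$, $512k+256\le 512(k+1)$, and the trivial case $m<e^6(|\chi|+1)$ is handled correctly. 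Both routes ultimately rest on the same foundation (Theorem~\ref{thm:curves} plus Lemma~\ref{lem:cutandpaste}, since Theorem~\ref{thm:graphs} is proved through exactly that translation), so neither is ``more elementary,'' but the trade-off is visible: the paper's path is shorter in dependency depth and exploits the cleaner exponential bound of Corollary~\ref{cor:curves}, while yours is notationally lighter because the arc-to-curve bookkeeping (the $4k+4$ and the $+4$ from shared endpoints) has already been absorbed into Theorem~\ref{thm:graphs}, at the cost of carrying along the additive $-256|\chi|$ term. One small remark on the part you flag as needing care: the paper has already set up the conventions so that Theorem~\ref{thm:graphs} \emph{is} verbatim a statement about families of non-homotopic arcs on $\Sigma$ with $\Cr$ the sum of minimal intersection numbers (the paper says so explicitly just after restating the theorem), so the dictionary you worry about is a non-issue; likewise you do not actually need to realize the arcs simultaneously in minimal position, since $\Cr(\mathcal{A})$ is by definition the sum of the topological invariants $i(a,b)$, each of which is at most $k$ by hypothesis.
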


When restricted to arcs drawn in the plane, these bounds match, up to the constant in the exponential, the lower bounds in \cite{pach-tardos-toth}.

In the final section (Section \ref{sec:optimality}) of our paper, we construct examples of arcs and curves which match, up to constants, our lower bounds for crossing estimates (Theorems \ref{thm:graphcrossingexample} and \ref{thm:curvesexample}). The examples we construct, while inspired by hyperbolic surfaces, are in fact combinatorial. The combinatorial setup, as in the work of Chas or Despr\'e-Lazarus \cite{Chas, Despre-Lazarus}, is very useful when quantifying intersection properties of curves and arcs.

\noindent{\bf Acknowledgements.}

We are grateful to Arnaud de Mesmay for useful comments and encouragements, and to Jasmin J\"org and Sebastian Baader for inspiring conversations on crossing numbers.

\section{Preliminaries}

We briefly review some facts about arcs and curves on surfaces and some basic hyperbolic geometric tools useful for their study. 
Throughout $\Sigma$ will be a topological finite-type orientable surface. It is determined by its genus $g$ and number of punctures $n$. As we will be drawing arcs and graphs on $\Sigma$, it will be convenient to think of the punctures as marked points, so we fill in the punctures so that they can serve as endpoints of arcs (or, equivalently, drawings of edges of graphs) to obtain a surface $\overline{\Sigma} = \Sigma \cup P$ where $P$ is the set of marked points. Its Euler characteristic is $\chi = \chi (\Sigma) = 2-2g -n$ and we will only be considering $\Sigma$ such that $\chi <0$. For instance, $\Sigma$ can be a thrice punctured sphere or a genus $2$ closed surface. 

A closed curve is the continuous image of a circle on $\Sigma$ and we consider them up to (free) homotopy. In particular, we are only interested in closed curves that are not homotopic to a point (or a puncture / marked point). We only consider primitive closed curves, that is curves that are not iterates of another curve. Note that $|\chi|$ is the number of three-holed spheres on the complement of maximal set of disjoint and homotopically distinct simple closed curves (where maximality is with respect to inclusion). The three-holed spheres are generally called pants or pairs of pants. A maximal set of simple closed curves is of size $3g-3+n$ and is called a pants decomposition

An arc is the continuous image of a closed interval on $\overline{\Sigma}$ whose endpoints lie on $P$ and such that the image of the corresponding open interval lies entirely in $\Sigma$. Again, we consider arcs up to homotopy, but this time such that the homotopy fixes the endpoints and such that, outside of the endpoints, the homotopies are not allowed to pass through the marked points $P$. Again we exclude trivial arcs (which can only arise if an arc has both endpoints on a single marked point). Note that, due to the fact that they have endpoints, arcs are always primitive. 

We define intersection as an invariant of homotopy classes. Namely, for curves
$$
i(\alpha,\beta) = \min_{\alpha' \sim \alpha,\beta'\sim \beta} |\alpha' \cap \beta'|
$$
where $\sim$ means freely homotopic and $| \cdot |$ denotes cardinality, and we assume that there are no triple intersection points between $\alpha'$ and $\beta'$. Alternatively, $i(\alpha,\beta)$ is the minimum number of transversal intersections among representatives of the respective homotopy classes.

When you apply the definition above to the intersection of a curve with itself, this always gives an even number. Hence it is standard to define the self-intersection number of a curve $\alpha$ as being the quantity $\frac{1}{2} i(\alpha,\alpha)$. This is its minimal number of transversal self-intersections for any representation without triple points. For arcs, we define intersection and self-intersection analogously. That is
$$
i(a,b) = \min_{a' \sim a,b'\sim b} |a' \cap b'|
$$
but with $|a' \cap b'|$ denoting cardinality outside of their endpoints and again under the assumption that there are no triple intersection points. An arc or a curve is said to be simple if it has no self-intersection points. When we have a collection $\mathcal{C}$ of curves or arcs, by its crossing $\Cr(\mathcal{C})$ we mean the sum of all the intersections. 

Since $\Sigma$ satisfies $\chi(\Sigma)<0$, it admits hyperbolic metrics, that is complete finite area hyperbolic metrics $X$, such that $X$ and $\Sigma$ are homeomorphic (denoted $X\cong \Sigma$). The finite area/complete metric condition forces the punctures to be realized by cusps. In this context, given a hyperbolic surface $X \cong \Sigma$, any closed curve is realized by a unique closed geodesic. Similarly, an arc is realized by a unique bi-infinite geodesic between the corresponding cusps. 

A highly used and convenient fact about geodesic representatives of homotopy classes of closed curves and arcs on hyperbolic surfaces is that they minimize intersection. More precisely, they minimize both self and pairwise intersection (but you have to count intersection with multiplicity in the event that the hyperbolic metric imposes a triple intersection point). Given a hyperbolic surface, we denote by $\ell_X(\gamma)$ the length of the unique closed geodesic in the homotopy class of a curve $\gamma$. 

On a given hyperbolic surface $X$ of genus $g$ with $n$ cusps, we can bound the number of closed geodesics of given length. This is essentially Lemma 6.6.4 from \cite{BuserBook}, but the statement is slightly modified in the presence of cusps.

\begin{lemma}\label{lem:buser}
Let $X$ be a hyperbolic surface of genus $g$ with $n$ cusps. The number of primitive closed geodesics of $X$ of length at most $L$ is bounded above by 
$$
(g+  \lceil n/2 \rceil -1 )\, e^{L+6}
$$
\end{lemma}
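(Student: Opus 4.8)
The plan is to follow Buser's argument for Lemma~6.6.4 of \cite{BuserBook}, adjusting it to account for the cusps. Write $X = \Hyp^2/\Gamma$ with $\Gamma < \PSL(2,\R)$ discrete; primitive closed geodesics of $X$ are in bijection with conjugacy classes of primitive hyperbolic elements of $\Gamma$, the geodesic length being the translation length of the element. Fix a Margulis-type constant $\eps_0$ (one checks $\eps_0 = 2\arcsinh(1)$ works): the thin part $X_{\mathrm{thin}} := \{p \in X : \mathrm{injrad}_X(p) < \eps_0/2\}$ is then a disjoint union of standard collars around simple closed geodesics of length $<\eps_0$ and of standard cusp neighbourhoods. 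I would split the primitive closed geodesics of length $\le L$ into those contained in $X_{\mathrm{thin}}$ and those meeting the complementary thick part $X_{\mathrm{thick}}$. A connected closed geodesic contained in $X_{\mathrm{thin}}$ lies in a single component; cusp neighbourhoods contain no closed geodesics, and the only primitive one inside a collar is its core. Since short simple closed geodesics are pairwise disjoint, this first family has size at most $3g-3+n$, which will be absorbed at the end.

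For the second family, the idea is a covering-and-packing estimate. First cover $X_{\mathrm{thick}}$ by metric balls $B(q_1,r),\dots,B(q_N,r)$ centred at points of $X_{\mathrm{thick}}$, with $\{q_j\}$ a maximal $r$-separated subset for a fixed small $r$ (e.g.\ $r=1 < \eps_0$); then the balls $B(q_j,r/2)$ are embedded and pairwise disjoint, so by Gauss--Bonnet, $\area(X) = 2\pi|\chi|$, one gets $N \le |\chi|/(\cosh(r/2)-1)$. Now let $c$ be a primitive closed geodesic of length $\le L$ meeting $X_{\mathrm{thick}}$; pick a point of $c\cap X_{\mathrm{thick}}$ lying in some $B(q_j,r)$ and lift to $\Hyp^2$ to obtain a primitive hyperbolic $\gamma_c \in \Gamma$ representing the free homotopy class of $c$ with $d_{\Hyp^2}(\tilde q_j, \gamma_c\tilde q_j) \le L + 2r$. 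Since distinct closed geodesics give distinct conjugacy classes, $c \mapsto \gamma_c$ is injective, so the second family has size at most $\sum_{j=1}^N \#\{\gamma\in\Gamma : d_{\Hyp^2}(\tilde q_j,\gamma\tilde q_j)\le L+2r\}$. Finally, for each fixed $j$ the orbit points $\gamma\tilde q_j$ are $2\rho$-separated with $\rho := \eps_0/2 \le \mathrm{injrad}_X(q_j)$, so packing balls of radius $\rho$ inside the ball of radius $L+2r+\rho$ in $\Hyp^2$ bounds that count by $(\cosh(L+2r+\rho)-1)/(\cosh\rho-1)$.

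Assembling the pieces, the number of primitive closed geodesics of length $\le L$ is at most
\[
(3g-3+n) \;+\; \frac{|\chi|}{\cosh(r/2)-1}\cdot\frac{\cosh(L+2r+\rho)-1}{\cosh\rho-1}.
\]
Using $\cosh x - 1 < e^x/2$ together with $r = 1$ and $\rho = \arcsinh(1)$ (so that $\cosh\rho-1 = \sqrt2-1$), the second term is strictly below $\tfrac12|\chi|\,e^{L+6}$, with enough slack to swallow the $(3g-3+n)$ term as well; since $\tfrac12|\chi| = g-1+\tfrac{n}{2} \le g-1+\lceil n/2\rceil$, the claimed bound follows. The delicate point is the bookkeeping of universal constants: $\eps_0$ must be small enough that $X_{\mathrm{thin}}$ genuinely decomposes into collars and cusp neighbourhoods, yet large enough that $\rho = \eps_0/2$ makes the product of the covering constant $1/(\cosh(r/2)-1)$ and the packing constant $1/(\cosh\rho-1)$ fall below $e^6$; the value $\eps_0 = 2\arcsinh(1)$ does both. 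Beyond this, the only change from Buser's closed-surface argument is that the cusps enter solely through $\area(X) = 2\pi|\chi|$ and through replacing the half-integer $n/2$ by $\lceil n/2\rceil$.
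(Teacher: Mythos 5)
Your argument is correct, but it takes a genuinely different route from the paper's. The paper reduces to the closed case by viewing the cusped surface as a noded (degenerate) closed surface: when $n$ is even it pairs the cusps to produce a noded surface of genus $g + n/2$, and when $n$ is odd it first attaches a once-cusped torus to reach genus $g + \lceil n/2\rceil$; it then cites Buser's Lemma~6.6.4 directly, a two-line reduction that nevertheless implicitly relies on that lemma extending to the noded setting. You instead re-prove the count from scratch by Buser's own method adapted to cusps: thin closed geodesics are exactly collar cores (at most $3g-3+n$ of them, cusp neighbourhoods contributing nothing), and the thick part is handled by a Gauss--Bonnet covering bound $N \le |\chi|/(\cosh(r/2)-1)$ followed by an orbit-packing estimate in $\Hyp^2$. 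This buys self-containedness — cusps enter only through $\area(X) = 2\pi|\chi|$ and the $\eps_0$-thin decomposition, with no appeal to degenerate surfaces — and the numerics do check out: with $r=1$ and $\rho=\arcsinh 1$ the constant $\bigl(2(\cosh\tfrac12 -1)(\sqrt2-1)\bigr)^{-1}e^{2+\arcsinh 1}\approx 169 < e^6/2 \approx 202$, leaving slack to absorb the $3g-3+n \le \tfrac32|\chi|$ thin-part term before comparing $\tfrac12|\chi|=g-1+\tfrac n2 \le g-1+\lceil n/2\rceil$. The paper's route is shorter; yours is arguably cleaner, though the claim that $\eps_0 = 2\arcsinh 1$ gives the desired thick--thin decomposition deserves an explicit citation (Buser, Theorem~4.1.6) rather than a ``one checks.''
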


\begin{proof}
If $X$ is closed, we have $n=0$ and the statement is exactly Lemma 6.6.4 from \cite{BuserBook}. If $n$ is even, by pairing the cusps together, one can think of it as a degenerate closed surface of genus $g+n/2$ where $n/2$ curves have been pinched to become length $0$. These are called noded surfaces, see for instance \cite{BuserBook}. If $n$ is odd, by adding a once-cusped torus, we can obtain a noded surface of genus $g + \lceil n/2 \rceil$. In either case, we can apply Lemma 6.6.4 from \cite{BuserBook} to obtain the result.
\end{proof}

We will use this result to show, that given $m$ curves on a hyperbolic surface, as $m$ grows, the average length of these curves must be on the order of $\log(m)$, and this independently of the choice of hyperbolic metric $X$.

\begin{proposition}\label{prop:average}
Let $\gamma_1,\hdots,\gamma_m$ be a collection of distinct closed and primitive closed curves on $\Sigma$ and $\lambda>0$. Then for any hyperbolic metric $X \cong \Sigma$ we have 
$$
\sum_{k=1}^{m} \ell_X(\gamma_k) > \frac{\lambda}{1+\lambda}\left(  \log(m) -  \left(\log((1+\lambda)(g+ \lceil n/2 \rceil-1)+6\right)\right).
$$
\end{proposition}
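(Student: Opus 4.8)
The plan is to run a short pigeonhole argument on top of the geodesic count in Lemma~\ref{lem:buser}. Fix a hyperbolic metric $X\cong\Sigma$. Each $\gamma_k$ is a nontrivial primitive free homotopy class, hence is represented by a unique closed geodesic, and distinct classes have distinct geodesic representatives; so the $m$ geodesics representing $\gamma_1,\dots,\gamma_m$ are pairwise distinct, and Lemma~\ref{lem:buser} controls how many of them can be short. Concretely, writing $C:=g+\lceil n/2\rceil-1$, for every $L>0$ the number of indices $k$ with $\ell_X(\gamma_k)\le L$ is at most $C\,e^{L+6}$.

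Next I would tune the threshold $L$ so that this count is a prescribed fraction of $m$. Choosing $L$ with $C\,e^{L+6}=\frac{m}{1+\lambda}$, that is $L=\log m-\log\!\big((1+\lambda)C\big)-6$, at most $\frac{m}{1+\lambda}$ of the curves have length $\le L$, so at least $m-\frac{m}{1+\lambda}=\frac{\lambda}{1+\lambda}m$ of them have length strictly larger than $L$. Keeping only those curves in the sum gives
$$
\sum_{k=1}^m \ell_X(\gamma_k)\;>\;\frac{\lambda}{1+\lambda}\,m\,L\;=\;\frac{\lambda}{1+\lambda}\,m\Big(\log m-\log\!\big((1+\lambda)C\big)-6\Big),
$$
which is the claimed estimate, read as a bound on the average length $\frac1m\sum_k\ell_X(\gamma_k)$ (this is the ``average length is of order $\log m$'' statement). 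When $L\le 0$ there is nothing to prove, since the left-hand side is positive while the right-hand side is $\le 0$.

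There is no real obstacle here: all of the geometric content sits inside Lemma~\ref{lem:buser}, and what remains is a one-parameter optimization. The points needing a word of care are that the counting lemma is being applied to a \emph{list of homotopy classes} rather than to every geodesic of $X$ (so one uses that primitive classes have distinct geodesic representatives, each traversed exactly once), and that $\lambda>0$ is deliberately left free so that the product $\frac{\lambda}{1+\lambda}\big(\log m-\log((1+\lambda)C)-6\big)$ can be optimized over $\lambda$ when the estimate is fed into the proof of Theorem~\ref{thm:curves}. If one prefers to avoid choosing a threshold, an alternative is to order the curves by increasing length and use Lemma~\ref{lem:buser} in the form $\ell_X(\gamma_{(k)})\ge \log k-\log C-6$, then sum and invoke $\sum_{k\le m}\log k=\log(m!)\ge m\log m-m$; this yields the same type of bound but is less convenient to optimize.
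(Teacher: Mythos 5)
Your proof is correct and follows essentially the same route as the paper: fix the threshold $L$ so that $C\,e^{L+6}=\frac{m}{1+\lambda}$ with $C=g+\lceil n/2\rceil-1$, apply Lemma~\ref{lem:buser} to conclude that at least $\frac{\lambda}{1+\lambda}m$ of the distinct geodesic representatives have length strictly greater than $L$, and sum. (You also correctly supply the factor of $m$ that is missing from the displayed statement due to a typo, and your remark that the inequality is vacuous when $L\le 0$ matches the paper's remark following the proof.)
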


\begin{proof}
For $\lambda >0$, we set $L=L(\lambda,m)$ to be such that
$$
\frac{1}{1+\lambda} m = (g+ \lceil n/2 \rceil-1) e^{L+6}
$$
that is
$$
L= \log(m) - (\log((1+\lambda)(g+ \lceil n/2 \rceil-1)+6).
$$
Now since $m= \left(\frac{\lambda}{1+\lambda} + \frac{1}{1+\lambda}\right) m$,  by Lemma \ref{lem:buser}, at least $\frac{\lambda}{1+\lambda} m$ curves, are of length strictly greater than $L$, so in particular 
$$
\sum_{k=1}^{m} \ell_X(\gamma_k) > \frac{\lambda}{1+\lambda} m L = \frac{\lambda}{1+\lambda}m \left(  \log(m) -  \left(\log((1+\lambda)(g+ \lceil n/2 \rceil-1)+6\right)\right).
$$
\end{proof}
Note that the inequality only has any content provided $m$ is large enough, namely under the condition that 
$$
\log(m) > \log((1+\lambda)(g+ \lceil n/2 \rceil)-1) +6
$$
or, stated differently, that
$$
m> (1+\lambda)(g+ \lceil n/2 \rceil-1))\, e^6.
$$
To simplify our computations, in the proof of Theorem \ref{thm:curves}, we will make use of this proposition for $\lambda=1$. 

\section{The proof of the crossing theorem for curves}

In this section we prove Theorem \ref{thm:curves} from the introduction. 

Let $\Gamma$ be a collection of $m$ primitive and non-pairwise homotopic curves of $\Sigma$, all represented in minimal position and without any triple intersection points. Its intersection number is $\Cr(\Gamma)$. We think of $\Gamma$ as a graph drawing on $\Sigma$ with vertices the set of intersection and self-intersection points. By adding simple (and non-trivial) arcs, we can complete $\Gamma$ into a triangulation $T$, that is a decomposition of $\Sigma$ into triangles, without increasing the number of vertices. Note that $T$ is not necessarily a simplicial triangulation, and in particular it can contain (non-trivial) loops, and all of its edges are non-trivial simple arcs.

We now appeal to the circle packing theorem for triangulations embedded on surfaces (see \cite{He-Schramm} or \cite{Stephenson}). It states that there exists a unique hyperbolic metric $X \cong \Sigma$ and a circle packing on $X$ such that there exists a homeomorphism from $\Sigma$ to $X$ which sends the topological embedding of $T$ to the intersection graph of the circle packing, drawn by connecting the centers of touching circles. This gives us a geometric representative of $T$ with the length of an edge equal to the sum of the two radii of the touching circles. We denote by $r_i$, for $i=1,\hdots, \Cr(\Gamma)$, the radii of the circles (or disks). 

We can realize $\Gamma$ as a collection of closed geodesics on $X$. As closed geodesics are of minimal length in their homotopy class, they are of length at most the length of their geometric representatives. In particular
\begin{equation}\label{eq:length}
\ell_X(\Gamma) \leq 4 \sum_{i=1}^{\Cr(\Gamma)} r_i.
\end{equation}
as exactly two curves of $\Gamma$ pass through each intersection point and, on the corresponding disk, their length is twice the radius. Now we can apply the Cauchy-Schwartz inequality to obtain 
$$
 \sum_{i=1}^{\Cr(\Gamma)} r_i \leq \sqrt{\Cr(\Gamma)} \sqrt{\sum_{i=1}^{\Cr(\Gamma)} r_i^2}.
$$
Now if $D_i$ is the disk of radius $r_i$, as it is hyperbolic, its area is always greater than the area of a Euclidean disk of radius $r_i$. Furthermore, the disks are all disjoint and thus of total area less than the area of $X$. Hence the above inequality gives
\begin{equation}\label{eq:area}
 \sum_{i=1}^{\Cr(\Gamma)}r_i \leq \frac{1}{\sqrt{\pi}} \sqrt{\Cr(\Gamma)} \sqrt{\sum_{i=1}^{\Cr(\Gamma)} \pi r_i^2} < \frac{1}{\sqrt{\pi}} \sqrt{\Cr(\Gamma)} \sqrt{\area(X)}=  \frac{1}{\sqrt{\pi}} \sqrt{\Cr(\Gamma)} \sqrt{2 \pi |\chi|}
\end{equation}
Putting together Equations \ref{eq:length} and \ref{eq:area} we obtain
$$
\Cr(\Gamma) > \frac{1}{32 |\chi|} \left( \ell_X(\Gamma) \right)^2.
$$
Now we can apply Proposition \ref{prop:average} with $\lambda=1$ which tells us that 
\begin{eqnarray*}
 \ell_X(\Gamma) & >& \frac{1}{2}m \left(\log(m) - \left(\log(2(g+ \lceil n/2 \rceil-1)+6\right)\right)\\
 &=& \frac{1}{2} m \log\left(\frac{m}{2(g+\lceil n/ 2 \rceil-1) e^6}\right)
 \end{eqnarray*}
 In order to ensure that the right-hand quantity is positive, we now require that 
 \begin{eqnarray*}
\log\left(\frac{m}{2(g+\lceil n/ 2 \rceil-1) e^6}\right) & \geq & 0  \,\,\,\,\mbox{and so}\\
 m &\geq & 2\,e^6(g+\lceil n/ 2 \rceil-1).
 \end{eqnarray*}
Taking into account the parity of $n$, in order to ensure a clean condition on $m$ that only depends on the Euler characteristic $|\chi | = 2g-2 +n$
we will thus require that 
$$
 m \geq  e^6(|\chi|+1).
$$
We can conclude:
\begin{eqnarray*}
\Cr(\Gamma) & >& \frac{1}{128 |\chi|} \left(m \log\left(\frac{m}{2(g+\lceil n/ 2 \rceil-1) e^6}\right)\right)^2\\
& \geq & \frac{1}{128 |\chi|} \left(m \log\left(\frac{m}{(|\chi|+1) e^6}\right)\right)^2
 \end{eqnarray*}
as required.

\section{Two corollaries: bounding sets of curves and a crossing theorem for graphs}
\subsection{From curves to arcs}
Recall the result for graph drawings that we want to prove.
\begin{theorem}[Theorem \ref{thm:graphs}]
Let $G$ be a graph with $n$ vertices and $m$ edges drawn on a closed surface of genus $g\geq 0$ such that no two edges are homotopic. Then, for  $m \geq e^6(|\chi|+1)$, its crossing number satisfies
\[\Cr(G)\geq  \frac{1}{512|\chi|} m^2 \left({\log^2 \frac{m}{e^6(|\chi|+1)} - 256 |\chi| }\right).
\]
\end{theorem}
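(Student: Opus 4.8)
The plan is to derive Theorem~\ref{thm:graphs} from Theorem~\ref{thm:curves} by a concatenation construction (this is Lemma~\ref{lem:cutandpaste}) that replaces each edge by an essential closed curve while multiplying the number of crossings only by a bounded factor. Fix a drawing of $G$ on the closed genus‑$g$ surface that realizes $\Cr(G)$, with the $m$ edges pairwise in minimal position and without triple points, and regard the vertex set $P$ as marked points, so that $\Sigma=\overline\Sigma\setminus P$ has Euler characteristic $\chi<0$. To an edge $e$ from $u$ to $v$ I associate the closed curve $\gamma_e$ obtained by concatenating a parallel copy of $e$ from $u$ to $v$, a small loop $\ell_v$ encircling $v$ once, a parallel copy of $e$ run back from $v$ to $u$, and a small loop $\ell_u$ encircling $u$ once; as a free homotopy class this is the conjugacy class of $\ell_u\,[e]\,\ell_v\,[e]^{-1}$. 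Two topological facts are needed, both of which should come out of a careful analysis (the content of Lemma~\ref{lem:cutandpaste}): (i) $\gamma_e$ is non‑peripheral and not null‑homotopic — it winds nontrivially around the two cusp regions at $u$ and $v$, and for a loop edge ($u=v$) it is a non‑peripheral, essentially figure‑eight type class because $e$ is not a trivial arc — and, after a harmless choice of the parallel copies and loops, it is primitive; (ii) the assignment $e\mapsto\gamma_e$ is injective on homotopy classes, since the free homotopy class of $\gamma_e$ records the pair of cusps it enters together with the connecting sub‑path, which is exactly the arc $e$. Thus $\Gamma=\{\gamma_e:e\in E(G)\}$ is a family of $m$ distinct primitive non‑trivial closed curves on $\Sigma$.

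The second step bounds $\Cr(\Gamma)$ in terms of $\Cr(G)$. Place the loops $\ell_v$ inside pairwise disjoint tiny disks around the marked points — concentric when they encircle the same point — and run the two parallel copies of each edge inside a thin band around it, so that $\gamma_e$ looks like two strands alongside $e$ plus two little loops localized near $u$ and $v$. Comparing $\gamma_e$ with $\gamma_f$, the band parts contribute at most $4\,i(e,f)$ crossings, loops around a common vertex contribute none, and the interaction of a loop of one curve with the strands of the other, together with the overlap of the two bands near a shared endpoint, contributes only a bounded number of extra crossings per shared vertex. With the construction arranged carefully this yields
\[
\Cr(\Gamma)\ \le\ 4\,\Cr(G)+2\,m^2 ,
\]
i.e.\ at most four extra crossings per unordered pair of edges on average; pinning down this constant — in particular handling pairs of edges sharing one or two endpoints, and loop edges — is where the argument has to be run with care.

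Finally, since $\Gamma$ consists of $m\ge e^6(|\chi|+1)$ distinct primitive non‑trivial closed curves on $\Sigma$, Theorem~\ref{thm:curves} applies and gives
\[
\frac{1}{128|\chi|}\,m^2\log^2\!\frac{m}{(|\chi|+1)e^6}\ <\ \Cr(\Gamma)\ \le\ 4\,\Cr(G)+2m^2 ,
\]
whence
\[
\Cr(G)\ >\ \frac{1}{512|\chi|}\,m^2\log^2\!\frac{m}{e^6(|\chi|+1)}-\frac{m^2}{2}\ =\ \frac{1}{512|\chi|}\,m^2\!\left(\log^2\!\frac{m}{e^6(|\chi|+1)}-256|\chi|\right),
\]
which is the claimed inequality. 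The only real obstacle is the middle step: one must define $\gamma_e$ so that it is genuinely essential and primitive, so that $e\mapsto\gamma_e$ is injective, and so that the crossings created near the marked points and near shared endpoints are controlled by a sufficiently small absolute constant; everything else is a direct application of Theorem~\ref{thm:curves} together with elementary arithmetic.
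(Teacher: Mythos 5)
Your proof follows the same route as the paper: build the boundary curve $\gamma_e$ of a small neighbourhood of each edge, use the paper's Lemma~\ref{lem:cutandpaste} (equivalently your $\Cr(\Gamma)\le 4\,\Cr(G)+2m^2$, which is exactly the summed form of $i(\gamma_a,\gamma_b)\le 4\,i(a,b)+4$) to transfer crossings, and then invoke Theorem~\ref{thm:curves} followed by the same arithmetic. The reasoning and constants match the paper's argument, so this is essentially the intended proof.
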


This statement is, verbatim, the statement that a set $\mathcal{A}$ of arcs of size $m$ has crossing number $\Cr(\mathcal{A})$ that satisfies the same inequality above. Note that the statement only has content provided the right side of the inequality is positive.

To deduce this statement from our theorem about curves, we will need to associate a closed curve to an arc. We do so as follows: take an arc $a$ between marked points $p$ and $q$ (which are not necessarily distinct). We orient $a$ as going from $p$ to $q$. We, topologically, can think of $p$ and $q$ being small loops, say $\alpha_p$ and $\alpha_q$, oriented in the positive direction. We obtain a closed curve $\gamma_a$ by taking $a*\alpha_q*a^{-1}*\alpha_p$ where $*$ represents concatenation. We don't care about the orientation of $\gamma_a$, so similarly, $\alpha_a$ can be thought of as taking the boundary of a small neighborhood of $a$ (see Figure \ref{fig:arc2curve}).

\begin{figure}[h]
{\color{linkblue}
\leavevmode \SetLabels
\L(0.5*.83) $a$\\
\L(0.63*.33) $\gamma_a$\\
\L(0.25*.37) $\alpha_p$\\
\L(0.73*.10) $\alpha_q$\\
\L(0.33*.49) $a$\\
\L(0.5*.20
) $a^{-1}$\\
\endSetLabels
\begin{center}
\AffixLabels{\centerline{\epsfig{file =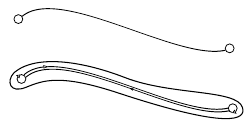,width=8.0cm,angle=0} }}
\vspace{-30pt}
\end{center}
\caption{From an arc to a curve} \label{fig:arc2curve}
}
\end{figure}

Notice that if $a \neq b$, then $\gamma_a \neq \gamma_b$. Furthermore, if $a$ is a non-trivial arc, then $\gamma_a$ is non-trivial and primitive. 

\begin{lemma}\label{lem:cutandpaste}
Let $a$, $b$ be arcs on a surface, and let $\gamma_a$ and $\gamma_b$ be the unique closed curves described above. Then
$$
i(\gamma_a,\gamma_b) \leq 4 i(a,b) + 4.
$$
\end{lemma}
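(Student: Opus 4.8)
The plan is to estimate $i(\gamma_a,\gamma_b)$ by choosing good representatives of $\gamma_a$ and $\gamma_b$ built directly from minimizing representatives of $a$ and $b$, and then counting intersections combinatorially. Put $a$ and $b$ in minimal position, so that they cross exactly $i(a,b)$ times away from the marked points; let $\{p,q\}$ be the endpoints of $a$ and $\{p',q'\}$ the endpoints of $b$. Take the loops $\alpha_p,\alpha_q,\alpha_{p'},\alpha_{q'}$ to be boundaries of \emph{very small} disjoint disks around the marked points, small enough that they are disjoint from each other and that inside each disk $a$ and $b$ look like a collection of radial arcs (one or two per curve through that point, depending on whether the endpoints coincide). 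Then $\gamma_a$ is freely homotopic to the boundary of a thin regular neighborhood $N_a$ of $a$, and likewise $\gamma_b \simeq \partial N_b$; I will use exactly these representatives.

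The key step is the intersection count. Each transversal crossing of $a$ and $b$ in the interior contributes, after thickening, at most $4$ crossings between $\partial N_a$ and $\partial N_b$ (the two strands of $\partial N_a$ meet the two strands of $\partial N_b$ in at most $2\times 2=4$ points near that crossing point), which accounts for the $4i(a,b)$ term. The additive $+4$ comes from the endpoints: near a marked point $x$ that is an endpoint of $a$ but not of $b$ (or vice versa), $\partial N_a$ can be pushed off the disk around $x$ and contributes nothing extra; but near a marked point that is a \emph{shared} endpoint of $a$ and $b$, the little loops $\alpha_x$ appearing in both $\gamma_a$ and $\gamma_b$, together with the turning strands of $N_a$ and $N_b$, force a bounded number of crossings there. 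Since $a$ has at most two endpoints and $b$ at most two, there are at most two shared endpoints, and a local picture at each shared endpoint shows it contributes at most $2$ crossings; hence at most $4$ in total. Summing gives $i(\gamma_a,\gamma_b)\le 4 i(a,b)+4$, and since intersection number is the minimum over all representatives, the constructed representatives give the bound.

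The main obstacle is making the local endpoint analysis clean and tight enough to land the constant $4$ rather than something larger. One has to handle several cases: endpoints of $a$ and $b$ all distinct; one shared endpoint; two shared endpoints (including the degenerate case $p=q$, i.e.\ $a$ is a loop, or $a$ and $b$ share both endpoints); and in each case verify that after shrinking the loops $\alpha_x$ and thinning the neighborhoods, the only unavoidable crossings are the ones forced by the strands of $N_a$ and $N_b$ emanating from and returning to the shared point, together with the nesting of the tiny loops $\alpha_x$. A careful figure at a shared endpoint, showing the two strands of $\partial N_a$ and the two strands of $\partial N_b$ together with $\alpha_x$, should make the count transparent and confirm the bound of $2$ crossings per shared endpoint. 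The rest — that interior crossings contribute $\le 4$ each and that non-shared endpoints contribute nothing — is routine once the representatives are fixed.
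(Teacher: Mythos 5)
Your proposal is correct and follows essentially the same approach as the paper's proof: represent $\gamma_a,\gamma_b$ as boundaries of thin regular neighborhoods of minimal-position representatives of $a,b$, note that each interior crossing of $a$ and $b$ yields at most $4$ crossings of the boundary curves, and observe that only shared endpoints contribute additional crossings, at most $2$ per shared endpoint and hence at most $4$ in total.
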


\begin{proof}
An intersection point between $a$ and $b$ induces $4$ intersection points between $\gamma_a$ and $\gamma_b$ (see Figure \ref{fig:arc2curve2}). 

\begin{figure}[h]
{\color{linkblue}
\leavevmode \SetLabels
\endSetLabels
\begin{center}
\AffixLabels{\centerline{\epsfig{file =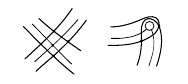,width=8.0cm,angle=0} }}
\vspace{-30pt}
\end{center}
\caption{Intersections} \label{fig:arc2curve2}
}
\end{figure}

The additional $+4$ comes from the endpoints of the arcs: notice that if two arcs have an endpoint in common, this necessarily creates $2$ intersection points between the corresponding curves. As arcs have 2 endpoints, this results in (at most) 4 extra intersection points. 
\end{proof}

We now prove Theorem \ref{thm:graphs}. Consider $\mathcal{A}$. a collection of $m$ arcs on $\Sigma$. We transform each of the arcs into a closed curve. If two curves have intersection $k$, the corresponding arcs have intersection at least $\frac{k}{4}-1$. 

We now apply Theorem \ref{thm:curves} to this set $\Gamma$ of curves. It has intersection at least 

$$
 \frac{1}{128 |\chi|} \left(m \log\left(\frac{m}{e^6(|\chi|+1) }\right)\right)^2
 $$
and so $\mathcal{A}$ must have total intersection at least $\frac{1}{4}$ that intersection, from which we substract at most $\frac{m^2}{2}$ (one for each pair of distinct arcs and 1/2 for each arc). The result follows from a simple computation.

\subsection{Curves and arcs which pairwise intersect at most $k$ times}

We begin by considering sets of curves with self and pairwise intersection bounded above by an integer $k$. 

\begin{corollary}[Corollary \ref{cor:curves}]
Let $k\geq 1$. If $\Gamma$ is family of $m$ homotopically distinct closed curves on $\Sigma$, such that $i(\alpha,\beta)\leq k$ for all $\alpha,\beta\in \Gamma$, then 
$$m\leq e^{6\sqrt{ 2 k |\chi|} +\log(|\chi|+1) +6 }.$$
\end{corollary}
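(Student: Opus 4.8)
The plan is to play the lower bound on $\Cr(\Gamma)$ provided by Theorem~\ref{thm:curves} against the trivial upper bound forced by the hypothesis. Since every pair of distinct curves in $\Gamma$ contributes at most $k$ to $\Cr(\Gamma)$, and (taking $\alpha=\beta$ in the hypothesis) each of the $m$ curves contributes at most $k/2$ to its own self-intersection count, one has
$$\Cr(\Gamma)\ \le\ k\binom{m}{2}+\frac{k}{2}\,m\ =\ \frac{k\,m^2}{2}.$$
So the whole argument reduces to extracting an upper bound on $m$ from the inequality $\frac{1}{128|\chi|}\bigl(m\log(m/((|\chi|+1)e^6))\bigr)^2<\frac{k m^2}{2}$, together with a separate treatment of the small-$m$ regime in which Theorem~\ref{thm:curves} does not apply.

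First I would dispose of the case $m<e^6(|\chi|+1)$. Here the conclusion is immediate: since $\chi<0$ is an integer we have $|\chi|\ge 1$, hence $2k|\chi|\ge 2$ and $\sqrt{2k|\chi|}\ge 1$, so
$$m\ <\ e^6(|\chi|+1)\ =\ e^{6+\log(|\chi|+1)}\ \le\ e^{\,6\sqrt{2k|\chi|}+\log(|\chi|+1)+6}.$$

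In the remaining case $m\ge e^6(|\chi|+1)$, Theorem~\ref{thm:curves} applies and, combined with the bound $\Cr(\Gamma)\le \tfrac12 k m^2$ above, gives
$$\frac{1}{128|\chi|}\left(m\log\frac{m}{(|\chi|+1)e^6}\right)^2\ <\ \frac{k m^2}{2}.$$
Cancelling $m^2$ and using that $\log\bigl(m/((|\chi|+1)e^6)\bigr)\ge 0$ in this range, I get $\log\bigl(m/((|\chi|+1)e^6)\bigr)<\sqrt{64\,k|\chi|}=8\sqrt{k|\chi|}$, whence
$$m\ <\ (|\chi|+1)e^6\cdot e^{8\sqrt{k|\chi|}}\ =\ e^{\,8\sqrt{k|\chi|}+\log(|\chi|+1)+6}.$$
Since $8=\sqrt{64}<\sqrt{72}=6\sqrt2$, this is in fact slightly stronger than the stated bound, which completes the proof.

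There is essentially no obstacle beyond careful bookkeeping of constants; the only point worth a second look is verifying that the small-$m$ case is genuinely subsumed by the claimed exponential, which is exactly where the factor $e^{6\sqrt{2k|\chi|}}\ge 1$ absorbs the gap. The visible slack (an exponent $8\sqrt{k|\chi|}$ versus $6\sqrt2\,\sqrt{k|\chi|}$) is harmless and in fact convenient: the very same computation, now fed the input $i(\gamma_a,\gamma_b)\le 4i(a,b)+4$ from Lemma~\ref{lem:cutandpaste}, will yield the arc statement of Corollary~\ref{cor:arcs}.
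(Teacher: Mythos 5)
Your proof is correct and takes essentially the same route as the paper: combine the lower bound on $\Cr(\Gamma)$ from Theorem~\ref{thm:curves} with the upper bound on $\Cr(\Gamma)$ forced by the hypothesis $i(\alpha,\beta)\le k$, solve for $m$, and dispose of the small-$m$ regime separately. Your bookkeeping is in fact slightly sharper (exponent $8\sqrt{k|\chi|}$ instead of $6\sqrt{2k|\chi|}$) because you bound $\Cr(\Gamma)\le \tfrac{k m^2}{2}$ directly rather than passing through the $m(m+1)/2$ unordered-pair count and $\tfrac{m}{m+1}>\tfrac{8}{9}$ for $m>8$, and you correctly identify that the regime one must exclude before invoking Theorem~\ref{thm:curves} is $m<e^6(|\chi|+1)$.
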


\begin{proof}
We can suppose that $m$, the size of $\Gamma$, satisfies $m > 8$, otherwise the estimate holds trivially.

Note that while there are $m^2$ ordered pairs of curves in $\Gamma$, there are $m(m-1)/2 +m = m(m+1)/2$ unordered pairs of curves of $\Gamma$ (we allow pairs of identical curves). In total, by Theorem \ref{thm:curves}, these create at least 
$$
\Cr(\Gamma) > \frac{1}{128 |\chi|}\left( m \log \left(\frac{m}{e^6 (|\chi|+1)}\right) \right)^2=f(m)
$$
intersection points. In particular, there is a pair $\alpha,\beta \in \Gamma$ that satisfies
$$
i(\alpha,\beta) > \frac{f(m)}{m(m+1)/2}> \frac{1}{64 |\chi |} \frac{m}{m+1} \log^2\left(\frac{m}{e^6 (|\chi|+1)}\right)>\frac{1}{72 |\chi |} \log^2\left(\frac{m}{e^6( |\chi|+1)}\right)
$$
where we just use $m>8$ for the last inequality.

By hypothesis, $i(\alpha,\beta) \leq k$, and hence 
$$
 \frac{1}{72 |\chi|} \log^2\left(\frac{m}{e^6 (|\chi|+1)}\right)<k
$$
and so 
$$
m < e^6( |\chi| +1)e^{6\sqrt{2|\chi| k}}
$$
as claimed.
\end{proof}

Now we pass to the case of arcs, again using Lemma \ref{lem:cutandpaste}.

\begin{corollary}[Corollary \ref{cor:arcs}]
Let $k\geq 1$. If $\mathcal{A}$ is family of $m$ distinct arcs on $\Sigma$, such that $i(a,b)\leq k$ for all $a,b\in \Gamma$, then 
$$m\leq e^{24\sqrt{ (k+1) |\chi|} +\log(|\chi| +1)+6 }.$$
\end{corollary}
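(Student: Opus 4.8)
The plan is to deduce this from Corollary \ref{cor:curves} via the arc-to-curve construction that precedes Lemma \ref{lem:cutandpaste}. To each arc $a\in\mathcal{A}$ we associate the closed curve $\gamma_a$. As recorded in the text, $\gamma_a$ is non-trivial and primitive whenever $a$ is, and $\gamma_a\neq\gamma_b$ whenever $a\neq b$; moreover $\gamma_a$ lives on $\Sigma$ (the loops $\alpha_p,\alpha_q$ around marked points are non-trivial there), so the relevant Euler characteristic is still $\chi$. Hence $\Gamma:=\{\gamma_a:a\in\mathcal{A}\}$ is a family of $m$ distinct non-trivial primitive closed curves on $\Sigma$.

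Next I would bound the intersections within $\Gamma$. By Lemma \ref{lem:cutandpaste}, for any $a,b\in\mathcal{A}$ — including the degenerate case $a=b$ — we have $i(\gamma_a,\gamma_b)\le 4\,i(a,b)+4\le 4k+4=4(k+1)$, using the hypothesis $i(a,b)\le k$. So $\Gamma$ is a family of $m$ homotopically distinct closed curves with $i(\alpha,\beta)\le 4(k+1)$ for all $\alpha,\beta\in\Gamma$, and since $k\ge 1$ the parameter $4(k+1)$ is at least $1$.

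Applying Corollary \ref{cor:curves} to $\Gamma$ with $4(k+1)$ in place of $k$ yields
$$m\le e^{\,6\sqrt{2\cdot 4(k+1)|\chi|}\,+\,\log(|\chi|+1)\,+\,6}=e^{\,6\sqrt{8(k+1)|\chi|}\,+\,\log(|\chi|+1)\,+\,6}.$$
Since $6\sqrt{8}=12\sqrt{2}<24$, we get $6\sqrt{8(k+1)|\chi|}\le 24\sqrt{(k+1)|\chi|}$, which gives the stated bound $m\le e^{\,24\sqrt{(k+1)|\chi|}\,+\,\log(|\chi|+1)\,+\,6}$.

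There is no genuine obstacle in this argument; the two points worth a moment's care are (i) that the hypothesis $i(a,b)\le k$ must be used also when $a=b$, so that Lemma \ref{lem:cutandpaste} simultaneously bounds the self-intersections of the curves $\gamma_a$ (which Corollary \ref{cor:curves} requires, as its proof distributes $\Cr(\Gamma)$ over the $m(m+1)/2$ unordered pairs, self-pairs included), and (ii) the elementary estimate $6\sqrt{8}\le 24$ that absorbs the loss incurred in replacing $k$ by $4(k+1)$.
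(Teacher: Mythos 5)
Your argument is correct and coincides with the paper's own proof: you pass from arcs to curves via the $\gamma_a$ construction, invoke Lemma \ref{lem:cutandpaste} to replace $k$ by $4k+4$, apply Corollary \ref{cor:curves}, and absorb the resulting constant $6\sqrt{8}=12\sqrt{2}$ into the stated $24$. The extra care you take (the $a=b$ case for self-intersection, and checking that the $\gamma_a$ are distinct, non-trivial, primitive so that Corollary \ref{cor:curves} applies) is implicit in the paper's terser proof but entirely consistent with it.
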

\begin{proof}
By Lemma \ref{lem:cutandpaste}, if any two arcs $a,b$ have intersection at most $k$, then the corresponding curves $\gamma_a$ and $\gamma_b$ have intersection at most $4k+4$. It suffices to replace $k$ by $4k+4$ in the formula of Corollary \ref{cor:curves}.
\end{proof}

\section{Optimality of the bounds}\label{sec:optimality}
We end with a construction showing that our bounds on the crossing number for collections of curves are roughly optimal. 

The basic idea for the strategy comes from the study of hyperbolic surfaces. Take a hyperbolic surface with injectivity radius bounded from below by a positive constant. We then observe that any two geodesics of length $\ell$ cannot intersect more than roughly $\ell^2$ times (this technique is used in \cite{Erlandsson-Parlier} quantifying an approach by Basmajian \cite{Basmajian} on the relationship between length and intersection). Now we consider all closed geodesics of length up to a given $L$ and we let $L$ grow. As the surface is hyperbolic, there are exponentially many of them, so we have roughly $m=a^L$ of them and so we get an upper bound on their length of roughly $\log(m)$. This gives us a family of curves of size $m$ with at most $m^2 \left(\log(m)\right)^2$ crossings. 

We now implement this strategy in a combinatorial setting, with the idea of getting explicit bounds. In particular, we are not going to be too picky, and will favor the simplicity of the arguments over the optimality of the constants obtained. 

Before explaining our constructions, we give away the punchlines. 

For arcs, our construction will yield the following example:

\begin{theorem}\label{thm:graphcrossingexample}
For any $n\geq 2$ and provided $m\geq n-2$, there exists a non-homotopic multigraph with $n$ vertices and $m$ with total crossings at most
$$
\frac{m^2}{n-2} \left(\log_2\left(\frac{8m}{n-2}\right)\right)^2.
$$
\end{theorem}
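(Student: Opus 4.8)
The plan is to build the multigraph explicitly from a combinatorial surrogate for a hyperbolic surface of bounded injectivity radius, following the strategy outlined just above the statement. First I would fix, for the given $n$, a combinatorial model of a surface with $n$ marked points whose ``fundamental group'' (or rather the relevant combinatorial group) has rank comparable to $n-2$; the cleanest choice is a rose/bouquet-type model where arcs are encoded as reduced words in a free product on roughly $n-2$ generators, with a fixed degree bound $d$ at the branch point so that the number of reduced words of combinatorial length $\le L$ is at least $c^L$ for some $c>1$ tied to $d$. Given $m$, choose $L$ so that the number of such words is at least $m$; this forces $L = O(\log m / \log(n-2))$, which after bookkeeping should give the claimed $\log_2(8m/(n-2))$ term. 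Then select $m$ distinct arcs $a_1,\dots,a_m$ realized by these words, with combinatorial length $\le L$ each.

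The second ingredient is the combinatorial analogue of ``two geodesics of length $\ell$ meet at most $\sim \ell^2$ times.'' Here I would argue directly on the model: an intersection point between two arcs in reduced/taut position corresponds to a pair of positions (one on each word) where the local pictures cross, and because both words are reduced and the model has a fixed local structure, each pair of positions can contribute at most a bounded number of crossings. This yields $i(a_i,a_j) \le C L^2$ for an explicit constant $C$, and summing over the $\le m^2$ ordered pairs gives $\Cr(\mathcal A) \le C m^2 L^2$. Choosing the constant in the definition of $L$ (the factor $8$ inside the log) to absorb $C$ and the base change to $\log_2$ produces exactly the stated bound $\frac{m^2}{n-2}\big(\log_2(8m/(n-2))\big)^2$; the constraint $m \ge n-2$ is just what is needed for $L \ge 1$ so that the logarithm is nonnegative and the construction is non-empty.

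I would organize the write-up as: (i) describe the combinatorial surface model and the encoding of arcs by words, recording the exponential lower bound on the word count; (ii) prove the crossing-per-pair estimate $i(a_i,a_j) \le C L^2$ as a self-contained combinatorial lemma about taut words in the model; (iii) assemble the two estimates, solve for $L$ in terms of $m$ and $n-2$, and verify the arithmetic closes with the constants as stated.

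The main obstacle I anticipate is step (ii): making the ``each position-pair contributes boundedly many crossings'' claim rigorous requires pinning down the combinatorial model precisely enough that ``minimal position'' for words is well-defined and behaves like geodesic position on a hyperbolic surface — i.e. that reduced words really are taut and that one cannot create spurious intersections that fail to be removable. The cited combinatorial frameworks of Chas and of Despr\'e--Lazarus are exactly the tools for this, and I expect the proof to lean on a normal-form/taut-position lemma from that setting; getting the constant $C$ explicit (rather than just $O(1)$) is the part where one must be careful but not clever, which matches the paper's stated preference for simple arguments over optimal constants.
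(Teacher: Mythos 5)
Your proposal captures one of the two key ideas — use a combinatorial model in which arcs are coded by words of bounded length, count words to get $m \sim c^L$, and bound pairwise crossings by $\sim L^2$ — but it misses the other, and that missing idea is precisely what produces the $\tfrac{1}{n-2}$ prefactor in the target bound. Your construction places all $m$ arcs on a single bouquet, so each arc can in principle cross every other arc, and the bound you would obtain is on the order of $m^2 L^2$. To get the stated bound you would then need $L^2 \lesssim \tfrac{1}{n-2}\log_2^2\bigl(\tfrac{8m}{n-2}\bigr)$, and no choice of base for your word counting achieves this: if you use $\sim n-2$ generators to make $L \sim \log m/\log(n-2)$, the mismatch with the target is still a factor of roughly $(n-2)/\log^2(n-2)$, which is unbounded in $n$. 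The paper instead decomposes the $n$-punctured sphere into $n-2$ pairs of pants, each meeting exactly one marked point of $\Sigma$ on a cuff, and constructs $m' = m/(n-2)$ arcs entirely contained in each pair of pants. Arcs lying in distinct pairs of pants are disjoint, so each arc meets only the $m'$ arcs in its own pair, and the total crossing count becomes $\frac{m^2}{n-2}\,L^2$ with $L \le \log_2(8m/(n-2))$. Without this localization step your argument cannot close.

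On your step (ii), you correctly flag it as the delicate point, but the paper sidesteps the difficulty rather than confronting it head on in a rose model. The combinatorial pair of pants built from twelve Euclidean unit-half squares is non-positively curved with systole exactly $2$, so any two distinct geodesic unit segments can cross at most once (two crossings would force either a bigon or a closed curve of length $<2$). Breaking geodesic arcs of length $\le L$ into unit segments immediately gives $i(a,b) \le \lceil\ell(a)\rceil \cdot \lceil\ell(b)\rceil \le L^2$ with a completely explicit constant $C=1$, which is cleaner than the position-pair bookkeeping you anticipate for a high-degree bouquet vertex. If you do want to pursue a graph-theoretic model in the spirit of Chas or Despr\'e--Lazarus, you would still need to impose some analogue of the pants structure to obtain the $1/(n-2)$ gain; the counting and taut-position lemmas alone do not suffice.
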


For curves, our construction results in the following:

\begin{theorem}\label{thm:curvesexample}
There exists a collection of $m$ curves on $\Sigma$ with total crossing number at most 
$$
\frac{m^2}{|\chi|}\left( \log_2\left( \frac{4m}{3|\chi|}\right)\right)^2.
$$
\end{theorem}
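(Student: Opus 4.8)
The plan is to carry out the ``combinatorial hyperbolic surface'' strategy outlined just above the statement, replacing actual hyperbolic geometry by an explicit combinatorial model of exponential growth. First I would fix a suitable combinatorial surface built on $|\chi|$ pairs of pants glued cyclically (or some similar concrete gluing realizing Euler characteristic $\chi$), together with a fixed finite alphabet describing ``combinatorial geodesics'': cyclically reduced words in the pants-crossing generators, up to cyclic permutation. The key features I want from this model are (i) the number of such words of combinatorial length at most $L$ grows like $c^{L}$ for an explicit constant $c>1$ tied to $|\chi|$ — this plays the role of the prime geodesic count and lets me solve $m \approx c^{L}$ to get $L \approx \log_c m$, i.e. length $\lesssim \log_2(4m/(3|\chi|))$ after bookkeeping — and (ii) a deterministic upper bound on the number of crossings between two such combinatorial curves of lengths $L_1, L_2$, namely on the order of $\tfrac{1}{|\chi|} L_1 L_2$ (this is the combinatorial analogue of the ``two geodesics of length $\ell$ on a surface with injectivity radius bounded below meet $\lesssim \ell^2$ times'' estimate attributed to Basmajian and Erlandsson--Parlier).

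With those two ingredients the assembly is routine: take $\Gamma$ to be the set of all combinatorial curves of length at most $L$ where $L$ is chosen so that $|\Gamma| = m$; then
\[
\Cr(\Gamma) \;=\; \sum_{\alpha,\beta \in \Gamma} i(\alpha,\beta)
\;\lesssim\; \binom{m}{2}\cdot \frac{C}{|\chi|}\, L^2
\;\lesssim\; \frac{m^2}{|\chi|}\left(\log_2\!\left(\frac{4m}{3|\chi|}\right)\right)^{2},
\]
where the constants hidden in $\lesssim$ are absorbed — as the authors warn, we favor clean arguments over optimal constants, so the precise numerology ($4/3$, the factor in front, the $\log_2$ base) is just chosen to make the counting estimate and the crossing estimate combine cleanly. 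I would state and prove the two lemmas separately: a \emph{counting lemma} (number of combinatorial curves of length $\le L$ is between, say, $\tfrac{3|\chi|}{4}\,2^{L}$ and something comparable, forcing $L \le \log_2(4m/(3|\chi|))$ when there are $m$ of them) and a \emph{crossing lemma} (two combinatorial curves of lengths $L_1,L_2$ cross at most $\approx L_1 L_2/|\chi|$ times, because each crossing must occur inside one of the $|\chi|$ pants and within a single pants the two words can only cross a bounded number of times per pair of letter-occurrences).

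The main obstacle I expect is the crossing lemma, specifically making precise ``intersection number of combinatorial curves'' so that it genuinely bounds the topological self- and pairwise intersection number of the curves these words represent on $\Sigma$, \emph{and} so that it is subquadratic-per-unit-length in the right way. In the honest hyperbolic setting this is where the injectivity-radius lower bound does the work (a thin collar would let two short geodesics wind around each other many times); combinatorially I need the gluing to be ``uniformly thick'', e.g. each gluing curve is a single generator that a reduced word traverses in a controlled way, so that a curve of combinatorial length $L$ enters each fixed pants at most $O(L/|\chi|)$ times and two such curves cross $O(1)$ times per pair of visits to a common pants. Getting this counting exactly right — and checking that reduced words really are in minimal position, so that $i(\alpha,\beta)$ is what the combinatorics computes rather than an overcount — is the delicate part; everything else (the counting lemma, the final summation, matching against Theorems \ref{thm:curves} and \ref{thm:graphs} to see optimality up to constants) is elementary. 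For Theorem \ref{thm:graphcrossingexample} I would then cut each such closed curve at a marked point to produce arcs in a surface with $n$ punctures, using $|\chi| = n-2$ in the genus-$0$ case and tracking the constant changes, exactly paralleling Lemma \ref{lem:cutandpaste} in reverse.
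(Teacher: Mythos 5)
Your overall blueprint (combinatorial model, a counting lemma giving $L \lesssim \log(m/|\chi|)$, a crossing bound quadratic in length, then assembly) matches the paper's, but the way the $1/|\chi|$ factor enters is where the two diverge, and your version has a real gap there. The paper does not prove or use a crossing bound of the form $i(\alpha,\beta)\lesssim L_1L_2/|\chi|$ for arbitrary combinatorial curves on the whole surface. Instead it cuts $\Sigma$ into $|\chi|$ pairs of pants and, on \emph{each} pair of pants separately, takes the $m'$ combinatorial closed geodesics of length $\le L$ living entirely inside that pair of pants. Curves in different pairs of pants are disjoint, so each of the $m=|\chi|m'$ curves only needs to be compared with the $m'$ curves in its own pair of pants; within a single pair of pants the crossing bound is the plain $i(\alpha,\beta)\le\lceil\ell(\alpha)\rceil\lceil\ell(\beta)\rceil$, coming from the systole-$2$, non-positively-curved square model where two unit geodesic segments cross at most once. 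That yields $\Cr\lesssim |\chi|\cdot m'^2 L^2=(m^2/|\chi|)L^2$ immediately, with $L\le 2\log_2(4m'/3)=2\log_2\bigl(4m/(3|\chi|)\bigr)$ from the counting step on a single pair of pants.

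Your proposed crossing lemma, ``two combinatorial curves of lengths $L_1,L_2$ cross at most $\approx L_1L_2/|\chi|$ times,'' is justified by assuming a curve of length $L$ enters each fixed pair of pants at most $O(L/|\chi|)$ times. That is false for general reduced words or geodesics: nothing prevents a long geodesic from spending nearly all of its length inside one pair of pants, in which case two such curves can cross on the order of $L_1L_2$ times with no $|\chi|$ savings at all. To restore the bound you would have to impose equidistribution on the family you select, which is extra machinery the paper avoids entirely by never letting a single curve cross a gluing curve. You correctly flag the crossing lemma as the delicate point; the fix is to drop it and confine each curve to one pair of pants, so that the $1/|\chi|$ comes from disjointness of supports rather than from a per-pants visit count. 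The counting lemma also shifts accordingly: you count $\sim 2^L$ curves on a single pair of pants and get the extra factor of $|\chi|$ by union over pants, rather than solving $m\approx c^L$ globally with $c$ depending on $|\chi|$.
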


The examples are all based on a construction on a pair of pants which we now explain. 

\subsection{Curves and arcs on a combinatorial pair of pants}

Instead of working with a hyperbolic surface, in the interest of obtaining explicit bounds, we do a combinatorial analogue. To begin with, we construct a combinatorial pair of pants out of Euclidean squares. To do this pair six squares of side lengths $\frac{1}{2}$ as in Figure \ref{fig:SquarePants} to obtain a right-angled hexagon with side lengths $1$ and a singular point in the middle (with total angle $3\pi$). We then double the hexagon to obtain a pair of pants as in \ref{fig:SquarePants} with three cuff lengths of length $2$. Note that the length of the shortest curve not homotopic to a point is $2$ (and this length is realized by the three cuff curves, which we call its systole). We denote by $P$ this pair of pants. It has a frontside, consisting in the original hexagon, and a backside, made of its double.

Note that the surface is locally Euclidean, except in its two singular points with $3\pi$ angle. Hence it is non-postively curved, and there is a notion of being geodesic, meaning locally distance minimizing.

\begin{figure}[h]
{\color{linkblue}
\leavevmode \SetLabels
\endSetLabels
\begin{center}
\AffixLabels{\centerline{\epsfig{file =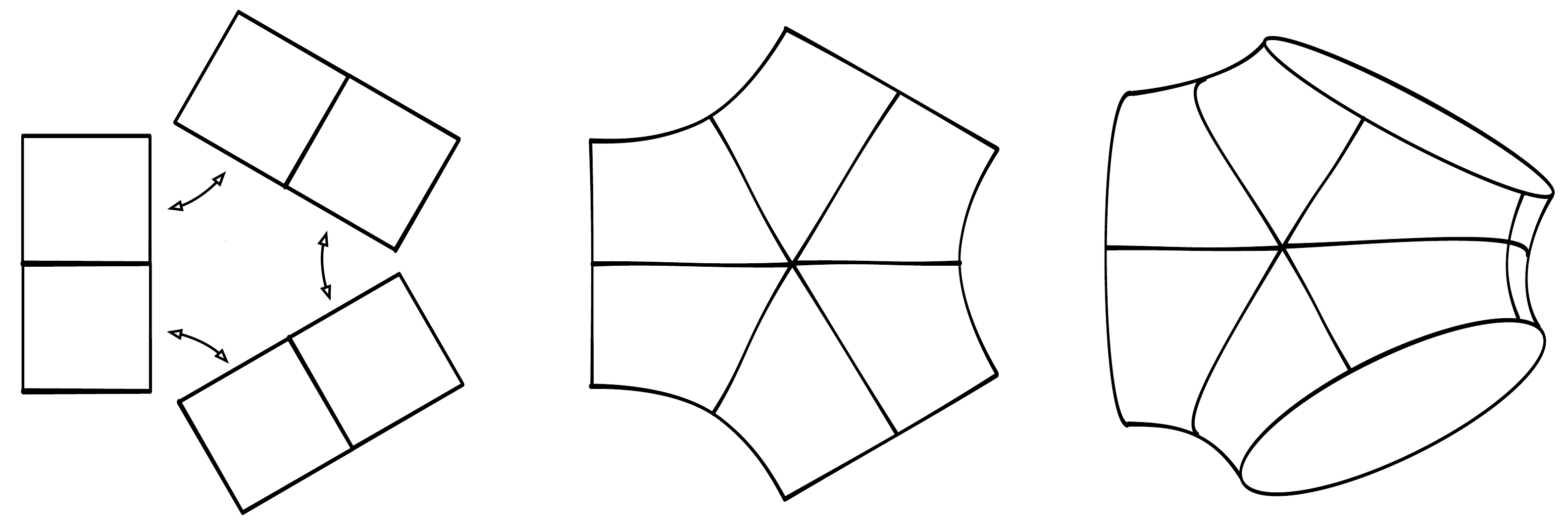,width=12.0cm,angle=0} }}
\vspace{-30pt}
\end{center}
\caption{Constructing a pair of pants $P$ with $12$ squares} \label{fig:SquarePants}
}
\end{figure}

On $P$ we will be considering homotopy classes of arcs with endpoints on the cuffs, and closed curves. As we will see, these both have a nice combinatorial description and natural geodesic representatives. A geodesic representative of a free homotopy class is a global length minimizer. In light of the non-positive curvature, any representative that is locally minimizing everywhere will be globally minimizing.

\begin{remark}While we won't need this explicitly, it is interesting to note that the geodesic representatives get "stuck" on the singular points. More precisely with the exception of the shortest closed curves (the cuffs) and the shortest non-trivial arcs, all geodesic representatives necessarily pass through the singular points. They lie entirely on the orthogonal interior paths marked on Figure \ref{fig:SquarePants}. Independently of this, we will make use of the fact that each homotopy class {\it can} be represented by such a path.
\end{remark}

The main feature we want to use for geodesic minimizers is that, like hyperbolic surfaces, they realize minimal transversal intersection. In light of the lower bound of $2$ on the systole, any two different segments of length $1$ can transversally intersect at most $1$ time. Otherwise they either bound a bigon (and hence do not intersect minimally - see for instance \cite{primer}) or they form a non-trivial closed curve of length strictly less than $2$, which is impossible. This gives a quadratic upper bound on their intersection. We write this observation as a lemma:

\begin{lemma} Let $a,b$ be two geodesic arcs on $P$. Then
$$
i(a,b) \leq \lceil \ell(a)  \rceil*\lceil \ell(b)  \rceil
$$
Similarly, let $\alpha$ and $\beta$ be two geodesic closed curves on $P$. Then 
$$
i(\alpha,\beta) \leq \lceil \ell(\alpha) \rceil*\lceil \ell(\beta) \rceil.
$$
\end{lemma}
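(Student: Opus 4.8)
The plan is to reduce the statement to the sub-claim highlighted in the paragraph preceding it — that two distinct geodesic segments of length at most $1$ on $P$ meet transversally in at most one point — and then to pass to the quadratic bound by subdividing each geodesic into $\lceil \ell(\cdot)\rceil$ pieces of length at most one and counting the intersections pair by pair. Concretely, for the arc case I would fix geodesic representatives of $a$ and $b$, which realize $i(a,b)$ since geodesics on a non-positively curved surface are in minimal position, put $p=\lceil \ell(a)\rceil$ and $q=\lceil \ell(b)\rceil$, and write $a=a_1\cup\dots\cup a_p$ and $b=b_1\cup\dots\cup b_q$ as unions of geodesic subsegments of lengths $\ell(a)/p\le 1$ and $\ell(b)/q\le 1$ respectively. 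Every transversal intersection point of $a$ and $b$ lies on some $a_j$ and some $b_k$, so $i(a,b)=|a\cap b|\le \sum_{j,k}|a_j\cap b_k|$, and it remains to bound each summand by $1$.

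For the per-pair bound I would invoke the systole argument, being slightly careful about the bookkeeping. First choose the $p-1$ cut points of $a$ and the $q-1$ cut points of $b$ to avoid the (finite) set $a\cap b$; then every point of $a_j\cap b_k$ (which is automatically a point of $a\cap b$, as $a_j\subset a$ and $b_k\subset b$) is interior to both $a_j$ and $b_k$. If some $a_j$ and $b_k$ met at two points $x,y$, the subarc of $a_j$ from $x$ to $y$ and the subarc of $b_k$ from $y$ to $x$ would be proper, hence of length strictly less than $1$, so their concatenation $c$ is a closed curve of length strictly less than $2$; if $c$ is null-homotopic it bounds a bigon, contradicting minimal position (see \cite{primer}), and if $c$ is essential it contradicts the systole of $P$ being $2$. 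Hence $|a_j\cap b_k|\le 1$, and summing over the $pq$ pairs gives $i(a,b)\le pq=\lceil \ell(a)\rceil\,\lceil\ell(b)\rceil$. The closed-curve statement is obtained by the identical argument applied to geodesic representatives of $\alpha$ and $\beta$, subdivided into $\lceil\ell(\alpha)\rceil$ and $\lceil\ell(\beta)\rceil$ pieces.

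The delicate point, and the place where a careless proof would slip, is precisely this bookkeeping: one must ensure that the closed curve $c$ produced from a hypothetical double intersection has length \emph{strictly} less than the systole, which is why the cut points are placed off $a\cap b$ (forcing the two relevant intersection points to be interior to the pieces and the two subarcs to be proper). A secondary technical wrinkle is the possibility that the geodesic representatives of $a$ and $b$ overlap along a common segment rather than meeting transversally; this is handled by first replacing them with a minimal-position transversal representative, whose intersection count is still $i(a,b)$ and whose pieces no longer contain a common subsegment. Beyond these points the argument is routine.
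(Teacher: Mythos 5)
Your argument follows the same route as the paper's (very terse) proof: subdivide each geodesic into $\lceil\ell(\cdot)\rceil$ pieces of length at most one, bound each pairwise intersection by $1$ via the systole, and sum. The expansion is accurate and the secondary wrinkle about overlapping representatives is handled sensibly.

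One caveat about your ``delicate point.'' You propose to shift the $p-1$ cut points off the finite set $a\cap b$ while keeping every piece of length $\le 1$. When $\ell(a)<p=\lceil\ell(a)\rceil$ there is slack to do this, but in the situation the paper actually applies the lemma to, all geodesic representatives have \emph{integer} length, so $\ell(a)=p$ and the cut points are forced at integer distances from the initial endpoint; you then have no room to reposition them, and an intersection point of $a\cap b$ could land exactly on a cut point. Fortunately, the repositioning is not needed. Even if $x$ or $y$ is an endpoint of a piece, so that the concatenated loop $c$ has length up to exactly $2$, it still has two transversal corners (the crossing angles at $x$ and $y$ are strictly less than $\pi$ on one side), so $c$ is not a local geodesic. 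On a non-positively curved surface a closed loop that fails to be a local geodesic is freely homotopic to a \emph{strictly} shorter one; hence the geodesic representative of $c$ has length $<2$, and your dichotomy (bigon versus short essential curve) applies unchanged. This corner-angle observation is the cleaner way to close the gap and also quietly repairs the paper's own ``strictly less than $2$'' claim, which as literally stated is not forced by the lengths alone.
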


\begin{proof}
The proofs are identical in both cases: we break the arcs or curves into segments of length $1$. The ceiling function is in case there is any length left, but in our case all geodesic representatives have integer length. 

By the above observation, any two segments can intersect at most once. The result follows.
\end{proof}

\subsection{Counting curves and arcs on a pair of pants}

We now count arcs and curves of given length on the pair of pants $P$. 

\subsubsection{Arcs}
We begin by counting oriented arcs. Notice that each such arc begins on one of our boundary curves, hence begins by a segment of length $\frac{1}{2}$. We label these initial  segments $f_1,f_2,f3$ (on the front of the pair of pants) and $b_1,b_2,b_3$ (on the back of the pair of pants). This is illustrated in Figure \ref{fig:arcs}.

\begin{figure}[h]
{\color{linkblue}
\leavevmode \SetLabels
\L(0.23*.45) $f_1$\\
\L(0.34*.45) $f_2$\\
\L(0.31*.7) $f_3$\\
\L(0.74*.435) $b_1$\\
\L(0.62*.4) $b_2$\\
\L(0.66*.7) $b_3$\\
\endSetLabels
\begin{center}
\AffixLabels{\centerline{\epsfig{file =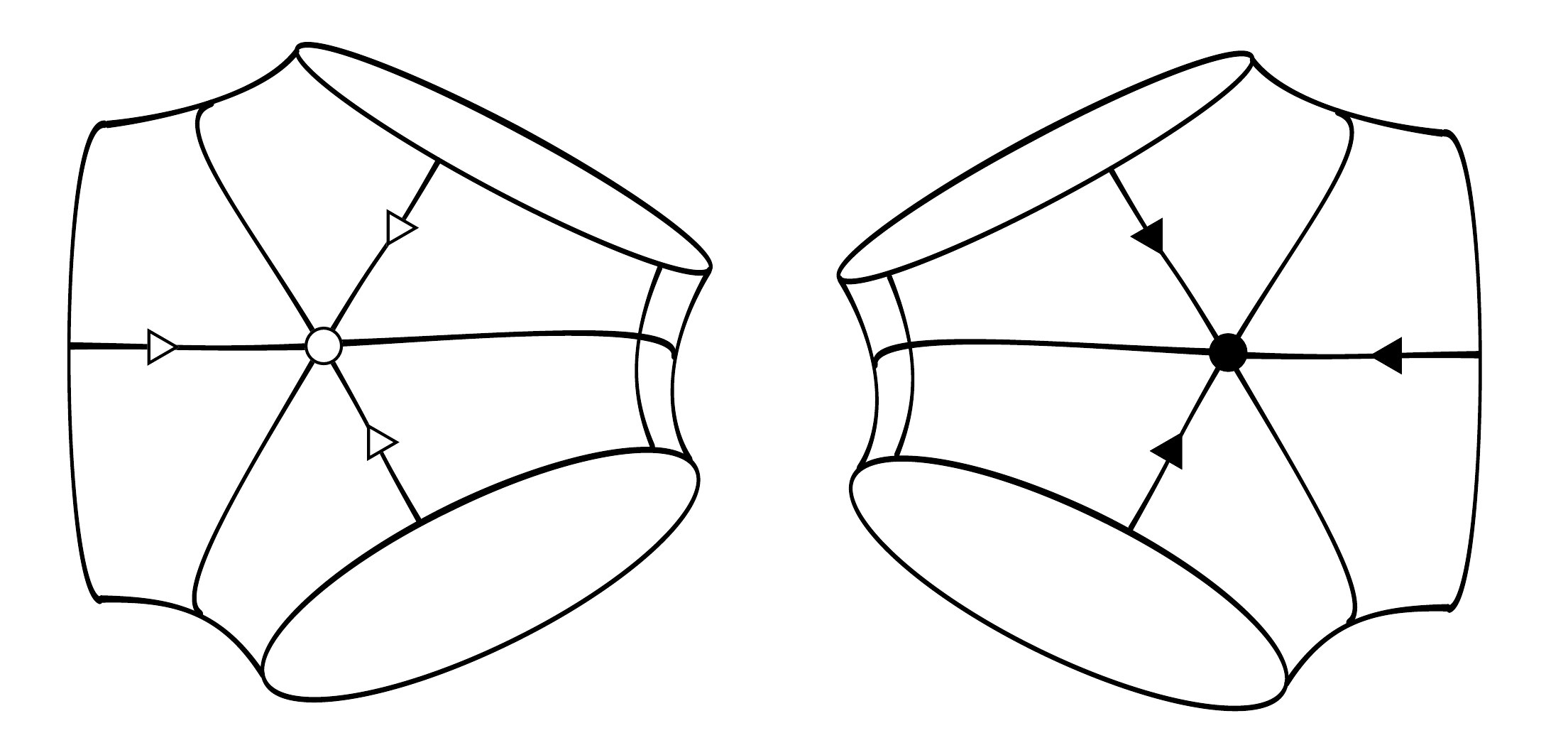,width=10.0cm,angle=0} }}
\vspace{-30pt}
\end{center}
\caption{The first segments of arcs on $P$ (front on left, back on right)} \label{fig:arcs}
}
\end{figure}

One immediate possibility is that, after the initial segment, the arc takes one of the remaining arcs to return to a boundary curve. This gives an arc of total length $1$. Up to homotopy, there are exactly three of these so we focus on arcs with length strictly greater than $1$. 

Consider an arc of length $L\geq 2$. After the initial arc, the arc necessarily follows "straight", otherwise the representative would have to make a left or a right turn at an angle of $\frac{\pi}{2}$, and hence wouldn't be geodesic. The arc is now comprised of a sequence of segments of length $1$ that go between the two singular points. Note that it alternates between the front singular point (in white in the figure) and the back one (in black in the figure). As we only consider geodesics, there is no backtracking, and so at each singular point, there are two choices (left or right). In particular, every oriented arc is determined by its initial and then a sequence of left and right moves. When we are done, there is a unique segment of length $\frac{1}{2}$ that finishes the arc geodesically. As we require length $\frac{1}{2}$ for the first segment, followed by a forced segment of length $1$, and an forced ending segment of length $\frac{1}{2}$, we have $2^{L-2}$ choices for the left/right paths. All in all, this means that there are 
$$
6\cdot 2^{L-2}
$$
oriented segments of length exactly $\geq 2$. Now each unoriented arc is counted exactly twice, and so we have 
$$
3\cdot 2^{L-2}
$$
unoriented arcs of length $L$. If we count all of those up to length $L$, we get 
$$
3 + 3 \sum_{l=2}^{L} 2^{l-2} = 3 + 3 \frac{1-2^{L-1}}{1-2}=3 \cdot 2^{L-1}.
$$
If we set $m=3 \cdot 2^{L-1}$, this gives a collection of $m$ arcs whose pairwise intersection never exceeds $L^2$. Restated in terms of $m$, this gives a collection of $m$ arcs with total crossing number at most
$$
\frac{m(m+1)}{2} L^2. 
$$
We have thus proved the following:

\begin{proposition}\label{prop:pantsarcs}
There exists a collection $\mathcal{A}$ of $m$ arcs on a pair of pants which satisfies
$$
\Cr(\mathcal{A}) \leq \frac{m(m+1)}{2} \left(\log_2\left(\frac{2m}{3}\right)\right)^2
$$
\end{proposition}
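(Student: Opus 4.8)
The plan is to follow exactly the counting already laid out in the text just before Proposition \ref{prop:pantsarcs}, repackaging the estimate on pairwise intersection of the constructed arcs into the stated inequality. First I would fix the family: take the $m$ unoriented geodesic arcs of length at most $L$ on $P$, where $L$ is chosen so that $m = 3\cdot 2^{L-1}$; this is the family whose cardinality count ($3 + 3\sum_{l=2}^{L}2^{l-2} = 3\cdot 2^{L-1}$) was just carried out. By construction every arc in $\mathcal{A}$ has (integer) length at most $L$, so the earlier lemma on geodesic arcs on $P$ (two geodesic arcs $a,b$ satisfy $i(a,b)\le \lceil\ell(a)\rceil\cdot\lceil\ell(b)\rceil$) gives $i(a,b)\le L^2$ for every pair, including a self-intersection bound when $a=b$.

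Next I would sum. The number of unordered pairs of arcs in $\mathcal{A}$, allowing a pair to consist of a single arc with itself (which contributes its self-intersection), is $\binom{m}{2}+m = \frac{m(m+1)}{2}$. Since each such pair contributes at most $L^2$ to the total crossing number, $\Cr(\mathcal{A}) \le \frac{m(m+1)}{2}L^2$.

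Finally I would eliminate $L$ in favour of $m$. From $m = 3\cdot 2^{L-1}$ we get $2^L = \tfrac{2m}{3}$, hence $L = \log_2\!\left(\tfrac{2m}{3}\right)$, and substituting gives
\[
\Cr(\mathcal{A}) \le \frac{m(m+1)}{2}\left(\log_2\!\left(\frac{2m}{3}\right)\right)^2,
\]
which is the claimed bound. One should also note the bound is only claimed for those $m$ of the form $3\cdot 2^{L-1}$ with $L\ge 2$ (i.e. $m\in\{6,12,24,\dots\}$), matching the discreteness of the construction; the statement as phrased is understood for such $m$, and for general $m$ one simply takes a subfamily of the next larger such family, which only decreases $\Cr$.

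There is no real obstacle here: every ingredient (the count of arcs of bounded length, the quadratic intersection lemma, the systole lower bound that powers it) has already been established in the preceding subsections, so the proof is a two-line assembly plus the arithmetic solving $m = 3\cdot 2^{L-1}$ for $L$. The only point requiring a word of care is the bookkeeping of which "pairs" are counted — making sure self-intersections are included via the $+m$ term in $\frac{m(m+1)}{2}$ — and the implicit restriction on the admissible values of $m$; neither is a genuine difficulty.
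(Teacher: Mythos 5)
Your proof proposal follows essentially the same approach as the paper's: the arc count $m=3\cdot 2^{L-1}$, the quadratic intersection lemma giving $i(a,b)\le L^2$, the pair count $\frac{m(m+1)}{2}$ (including self-pairs), and the substitution $L=\log_2(2m/3)$. The only addition is your explicit remark about the discreteness of admissible $m$ and the passage to subfamilies, which is a reasonable clarification but not a different argument.
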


For what follows, it will be useful to consider collections of arcs whose endpoints both lie on a given boundary curve. For that we slightly modify the above construction. 

We highlight the differences and then give estimates:
\begin{enumerate}
\item We only have $2$ choices for the initial segment.\\
\item As we require both endpoints to be on the same cuff, for the final segment, depending in which direction we arrive on the singlular point, it may not be possible to end our arc by taking the path of length $\frac{1}{2}$. This is because this may not correspond to a geodesic. More precisely, suppose we want to finish on either $f_1$ or $b_1$, if we arrive on the segment facing it, we can finish our arc there. If not, we are required to add another segment of length $1$ which this time will be opposite the correct segment.\\
\end{enumerate}
In short, we proceed as above, and we obtain $2\cdot 2^{L-3}$ paths of length $L-\frac{3}{2}$ which we then complete into paths of length $L-1$ (if we are in front of the correspond boundary curve) or of length $L$. As we have counted orientation, we divide by $2$ to obtain a collection of $2^{L-3}=m$ arcs of length either $L-1$ or $L$. As before, their total crossing is at most 
$$
\frac{m(m+1)}{2} L^2= \frac{m(m+1)}{2} \left(\log_2\left(8m\right)\right)^2.
$$

\subsubsection{Curves}
We remain on $P$ but this time we count closed curves. We begin in a singular point and observe that a closed curve can be described by a sequence of segments of length $1$ which join the two singular points. As the curve is closed, there are an even number of them. 

Now we count them: for the initial segment, as before, we label $\{F_1,F_2,F_3\}$ the three segments leaving from the front singular point and $\{B_1,B_2,B_3\}$. Once this choice is made, a closed curve consists as before in a sequence of left/right moves. The curve can  end after an even number of segments if the first and the last segment aren't opposite (otherwise, viewed cyclically, the curve backtracks). To avoid this, we proceed as follows and we include counting:

\begin{enumerate}
\item We have $6$ choices for the initial segment.\\
\item For $k\geq 2$, we then take an even number $2L-2$ of segments consisting in left/right turns. There are $2^{2k-2}$ choices.\\
\item We are now on the opposite side of where we started. At least one of two possible choices of left/right (possibly both) allows us to close the curve up properly. 
\end{enumerate}
This results in a collection of $6\cdot 2^{2k-2}$ curves of length $L=2k$. 

We now account for overcounting: we begin by dividing by $2$ to account for both possible orientations, as well as by $L$ to account for our choice of "initial" segment. This gives a total of 
$$
m = 3 \frac{2^{L-2}}{L}
$$
of length $L$. Using $2^{n/2} \leq \frac{2^n}{n}$ for all $n\geq 4$, we obtain 
$$
2^{L/2}\leq \frac{4m}{3}
$$
and so 
$$
L \leq 2\cdot \log_2\left( \frac{4m}{3}\right).
$$
As before, any two of our curves cross at most $L^2$ times, and so we have a collection of $m$ curves which cross at most
$$
\frac{m(m+1)}{2} L^2 \leq 2 m(m+1) \left( \log_2\left( \frac{4m}{3}\right)\right)^2
$$
times.

As before, we summarize this construction in the following statement:

\begin{proposition}\label{prop:pantcurves}
There exists a collection $\mathcal{C}$ of $m$ homotopically distinct closed curves on a pair of pants which satisfies
$$
\Cr(\mathcal{C}) \leq 2 m(m+1) \left( \log_2\left( \frac{4m}{3}\right)\right)^2.
$$
\end{proposition}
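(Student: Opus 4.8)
The idea is to let $\mathcal{C}$ be the full set of closed geodesics on the combinatorial pair of pants $P$ of some fixed length $L$, count them, and then combine the count with the length--intersection lemma above. First I would recall the combinatorial encoding of closed geodesics on $P$: every non-trivial closed geodesic other than the three cuffs runs along the orthogonal interior paths and decomposes into a cyclic word of unit-length segments, alternately visiting the front and back $3\pi$ cone points, with exactly two non-backtracking continuations (left or right) at each cone point. Fixing one of the six segments leaving a cone point as an ``initial'' segment, a closed geodesic of length $L=2k$ is then described by that choice together with a sequence of $2k-2$ left/right turns and a closing step; this gives $6\cdot 2^{2k-2}=6\cdot 2^{L-2}$ such encodings (the closing step is built into the requirement that the word, read cyclically, is reduced, so that at least one of the two last choices is legal).

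Next I would pass from encodings to homotopy classes by dividing out the evident symmetries: a factor $2$ for the two orientations of the curve and a factor $L$ for the choice of which of its $L$ unit segments was designated ``initial.'' This produces $m = 3\cdot 2^{L-2}/L$ distinct closed geodesics of length exactly $L$, and I take $\mathcal{C}$ to be this set (for a general value of $m$ one chooses $L$ with $3\cdot2^{L-2}/L\ge m$ and passes to a subset, which only decreases $\Cr$). The one quantitative step is to invert this relation: applying $2^{n/2}\le 2^n/n$ (valid for $n\ge 4$) with $n=L$ yields $2^{L/2}\le 4m/3$, hence $L\le 2\log_2(4m/3)$.

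Finally I would invoke the length--intersection lemma for geodesics on $P$: since all these geodesics have integer length $L$, any two of them intersect at most $\lceil L\rceil\cdot\lceil L\rceil = L^2$ times (and a curve ``with itself'' likewise contributes at most $L^2$). Summing over the $m(m+1)/2$ unordered pairs, allowing a curve paired with itself as a harmless overcount, gives
$$
\Cr(\mathcal{C}) \le \frac{m(m+1)}{2}\,L^2 \le \frac{m(m+1)}{2}\cdot 4\left(\log_2\!\left(\tfrac{4m}{3}\right)\right)^2 = 2m(m+1)\left(\log_2\!\left(\tfrac{4m}{3}\right)\right)^2,
$$
which is the claimed bound.

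\textbf{Main obstacle.} The delicate part is the combinatorial bookkeeping in the first two steps: one must check that the left/right description really is a bijection between cyclic reduced words of the correct parity and primitive closed geodesics on $P$, that the ``closing up'' condition is handled correctly (it may admit one or both final turns, which can shift the realized length, so some care or slack is needed), and that the overcounting factors of $2$ and $L$ are exactly the right ones and do not interfere with each other. As elsewhere in this section, I would be generous with the constants rather than chase optimality; once the count $m = 3\cdot 2^{L-2}/L$ is in hand, everything that follows is a one-line computation.
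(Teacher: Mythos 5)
Your proposal follows the paper's proof essentially verbatim: the same combinatorial encoding of closed geodesics on the square pair of pants $P$ as cyclic reduced left/right words, the same count of $6\cdot 2^{L-2}$ oriented pointed encodings, the same division by $2L$ to reach $m=3\cdot 2^{L-2}/L$, the same use of $2^{n/2}\le 2^n/n$ to get $L\le 2\log_2(4m/3)$, and the same $L^2$ pairwise bound summed over $m(m+1)/2$ pairs. The only difference is that you flag the ``general $m$'' issue and sketch a subset-passing patch, which the paper leaves implicit.
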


\begin{figure}[h]
{\color{linkblue}
\leavevmode \SetLabels
\L(0.255*.41) $F_1$\\
\L(0.366*.47) $F_2$\\
\L(0.29*.695) $F_3$\\
\L(0.715*.4) $B_1$\\
\L(0.61*.45) $B_2$\\
\L(0.665*.65) $B_3$\\
\endSetLabels
\begin{center}
\AffixLabels{\centerline{\epsfig{file =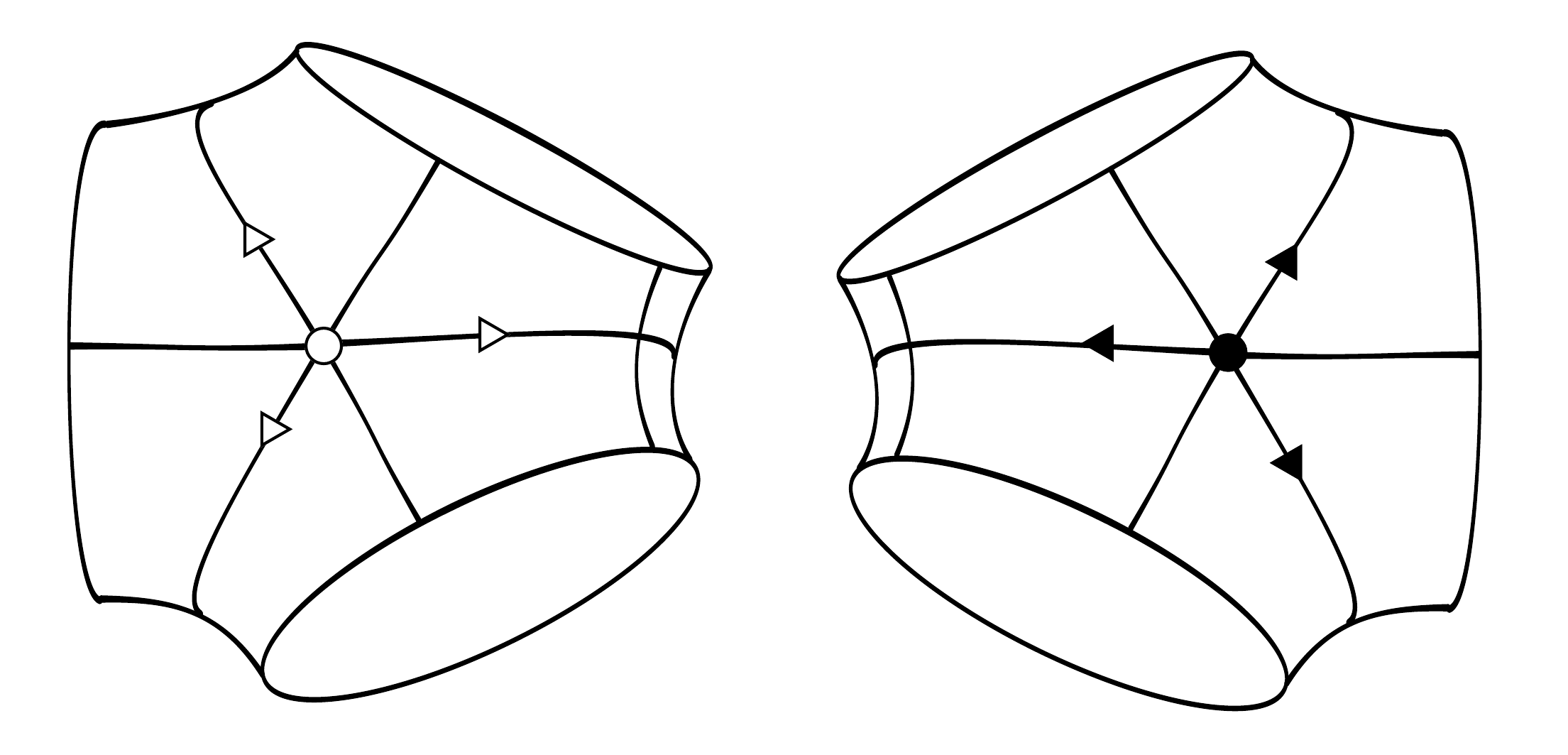,width=10.0cm,angle=0} }}
\vspace{-30pt}
\end{center}
\caption{The first segments of arcs on $P$ (front on left, back on right)} \label{fig:arcs}
}
\end{figure}

\subsection{Counting on a general surface and final estimates}

We now use the above estimates to obtain our general upper bounds. Consider a surface $\Sigma$ of Euler characteristic $\chi$. The idea is to split the curves or arcs that live on the individual pairs of pants. That way the arcs or curves only intersect a fraction of the others. 

We use this strategy in two distinct situations for our estimates, when $\Sigma$ is an $n\geq 3$ punctured sphere for arcs and when $\Sigma$ is a general surface for curves. 

\subsubsection{Arcs on a punctured sphere}
This case allows us to compare our estimates to those of Pach, Tardos and Toth \cite{pach-tardos-toth}. 

We split the sphere into $|\chi|=n-2$ pairs of pants such that each pair of pants has at least one of the boundary curves of $\Sigma$ as one of its cuffs (see Figure \ref{fig:sphere}). 

\begin{figure}[h]
{\color{linkblue}
\leavevmode \SetLabels
\endSetLabels
\begin{center}
\AffixLabels{\centerline{\epsfig{file =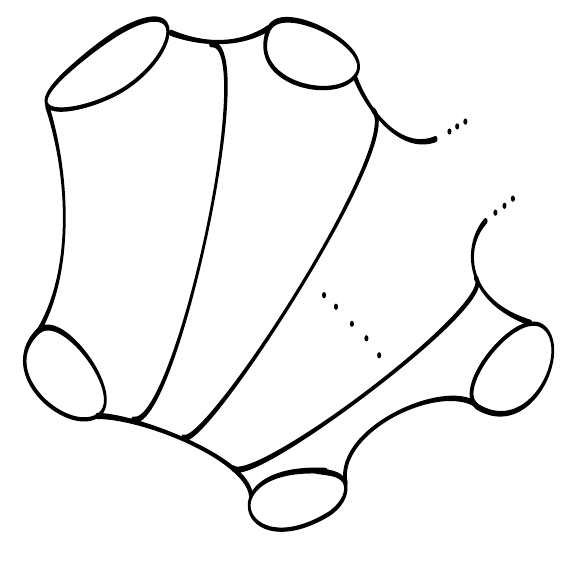,width=5.0cm,angle=0} }}
\vspace{-30pt}
\end{center}
\caption{A pants decomposition of a sphere with $n$ boundary curves} \label{fig:sphere}
}
\end{figure}

On each pair of pants, we use the above construction with arcs which end on the corresponding boundary curve of $\Sigma$ to obtain, for $L\geq 2$, collection of 

$$
m'= 2^{L-3}
$$
arcs of length either $L-1$ or $L$ on each pair of pants. The total number of arcs is $m=m'(n-2)$ and each one intersects at most $m' = \frac{m}{n-2}$ arcs. Using the fact that $L\leq \log_2(8 m') = \log_2\left(\frac{8m}{n-2}\right)$, this gives a collection of $m$ arcs with total crossing at most 
$$
\frac{m^2}{n-2} L^2 \leq \frac{m^2}{n-2} \left(\log_2\left(\frac{8m}{n-2}\right)\right)^2.
$$
This is the statement of Theorem \ref{thm:graphcrossingexample}. Using the same method, one can obtain estimates for graph drawings on more general surfaces.

\subsubsection{The case of curves}
Here we break the surface into $|\chi|$ pairs of pants and construct curves which lie on each pair of pants. 

On each pair of pants we construct $m'$ curves of length at most 
$$
L \leq 2\cdot \log_2\left( \frac{4m'}{3}\right).
$$
This gives a total of $m= |\chi| m'$ curves and each curve intersects at most $m'$ other curves at most $L^2$ times. This gives a total of at most
$$
\frac{m^2}{|\chi|}\left( \log_2\left( \frac{4m}{3|\chi|}\right)\right)^2.
$$
crossings, proving Theorem \ref{thm:curvesexample}. 

\bibliographystyle{hplain} 
\bibliography{CountingCurves} 

{\it Addresses:}\\
Department of Mathematics, University of Fribourg, Switzerland. \\
Universit\'e Gustave Eiffel, CNRS, LIGM, Marne-la-Vall\'ee, France. \\

{\it Emails:}\\
hugo.parlier@unifr.ch\\
alfredo.hubard@univ-eiffel.fr

\end{document}